\documentclass[a4paper,10pt,leqno, english]{amsart}
\title[Nonlinearity]
{Nonlinearity, Proper Actions and  Equivariant Stable  Cohomotopy.}

\author{No\'{e} B\'{a}rcenas }
        \address{Hausdorff  Center for  Mathematics\\
        Universit\"at  Bonn\\ 
        Endenicherstra\ss e 60 53115  Bonn, Germany}
         \email{barcenas@math.uni-bonn.de}
        \urladdr{http://wwwmath.uni-bonn.de/people/barcenas}

         \date{today}


\usepackage{hyperref}
\usepackage{pdfsync}
\usepackage{calc}
\usepackage{enumerate,amssymb}
\usepackage[arrow,curve,matrix,tips,2cell]{xy}
  \SelectTips{eu}{10} \UseTips
  \UseAllTwocells

\DeclareMathAlphabet\EuR{U}{eur}{m}{n}
\SetMathAlphabet\EuR{bold}{U}{eur}{b}{n}

\makeindex             



\theoremstyle{plain}
\newtheorem{theorem}{Theorem}[section]

\newtheorem{proposition}[theorem]{Proposition}

\theoremstyle{definition}
\newtheorem{definition}[theorem]{Definition}
\newtheorem{example}[theorem]{Example}
\newtheorem{condition}[theorem]{Condition}
\newtheorem{remark}[theorem]{Remark}

{\catcode`@=11\global\let\c@equation=\c@theorem}



\newcommand{\comsquare}[8]                   
{\begin{CD}
#1 @>#2>> #3\\
@V{#4}VV @V{#5}VV\\
#6 @>#7>> #8
\end{CD}
}

\newcommand{\xycomsquare}[8]                   
{\xymatrix
{#1 \ar[r]^{#2} \ar[d]^{#4} &
#3 \ar[d]^{#5}  \\
#6\ar[r]^{#7} &
#8
}
}











\newcommand{\ind}{\operatorname{ind}}

\newcommand{\E}{\rm E}

\newcommand{\EfinG}{\underline{\E} G}


\newcommand{\Komp}{\mathcal{COM}}




\newcommand{\ideal}{\hat{{\bf I}}}



\newcommand{\pt}{\{\bullet\}}


\newcommand{\B}{\rm B}
\newcommand{\higherlim}[3]{{\setbox1=\hbox{\rm lim}
        \setbox2=\hbox to \wd1{\leftarrowfill} \ht2=0pt \dp2=-1pt
        \mathop{\vtop{\baselineskip=5pt\box1\box2}}
        _{#1}}^{#2}#3}


\begin{document}

\typeout{----------------------------  linluesau.tex  ----------------------------}

\maketitle

\typeout{------------------------------------ Abstract ----------------------------------------}

In this  article  we  extend  the  classical  definitions  of  equivariant cohomotopy theory  to  the  setting  of  proper  actions  of  Lie  groups. We  combine  methods  originally  developed  in the  analysis  of nonlinear  differential  equations, mainly  in connection  with  Leray-Schauder theory,  and  on the  other  hand  from   developments  of  equivariant $K$-Theory  by  N.C. Phillips. We  prove  the  correspondence  with a  previous construction  of  W. L\"uck by constructing an  index.

 As  an  illustration of  these  methods, we introduce  a Burnside ring  defined  in analytical terms. With this  definition, we extend  a  weak  version  of  the Segal Conjecture to a certain family  of  Lie groups and  comment  the  relation to an  invariant in Gauge Theory, due  to Bauer  and  Furuta.
Keywords: Equivariant Cohomotopy, Proper Actions, Nonlinear  Analysis, Segal Conjecture. 
Subject Classification: 55P91. Secondary: 2205,47414.

\setcounter{section}{-1}

\section{Introduction.}
 \typeout{-------------------------------   Section 0: Introduction --------------------------------}

We extend  the  definition of  equivariant  cohomotopy  to  the  setting  of proper  actions  of  Lie  groups.  We  define  equivariant  cohomotopy  in  terms of  certain  nonlinear  perturbations  of  Fredholm  morphisms  of  Hilbert  bundles over  a  given proper  $G$-CW  complex. 

This  is  motivated by  the  fact that  the  technical  difficulties  involved  in the  proof  of  excision  for  equivariant  cohomology theories, where  the  equivariance  group  is  neither discrete, nor  compact Lie  cannot  be solved  with  constructions using  finite  dimensional  $G$-vector  bundles or  finite  dimensional  representations \cite{phillipsktheory}, \cite{emersonmeyer}. 

We  prove (Theorem \ref{theorem schwartzindex}) that  our  invariants  generalize  previous definitions,  such  as  that  of  W. L\"uck  \cite{lueckeqstable} in the  context  of  proper  actions  of  discrete  groups on  finite  $G$-CW  complexes. The  proof  has its  roots in methods  employed   in nonlinear  analysis \cite{schwartz}, \cite{doldfixpunkt}, \cite{marzantowicz}. But  is  motivated  and  impregnated   by   ideas,  methods  of  proof  and  constructions   concerning gauge  theoretical  invariants  in   \cite{bauer}.  

We  illustrate  the applications  of  our  methods by  several  examples. We  comment  the  potential  utility  of  this  approach  by  generalizing a  gauge theoretical  invariant  of $4$-dimensional smooth  manifolds, due  to  Bauer and Furuta  to  allow  proper  actions  of  Lie  groups  on  four-manifolds.  Finally, we  introduce a  Burnside ring  in  operator  theoretical  terms  and  as  a test  for  the  suitability of this definition. After  some  computational remarks we  verify  the  extension  of  a  weak  form  of  the  Segal Conjecture  for  a  certain class of   non-compact  Lie  groups.

 This  paper  is  organized  as  follows: in  section \ref{sectionreview}, a  review  of  equivariant  cohomotopy  for  compact  Lie  groups  is  given. Preliminaries  on  Hilbert Bundles  and  Fredholm  morphisms  between  them  are  given  in section \ref{sectionhilbert}. Section \ref{sectioncocycles} defines  the  analytical  objects  which  define  equivariant  cohomotopy  groups  for  non compact  group  of  symmetries, and  examples  are  considered  in section \ref{sectionexamples}. 
The  properties  of  equivariant  cohomotopy  are  discussed  in  \ref{sectioncohomology},  the  relation  to  previous  work  is  stablished  in  section  \ref{sectionindex}  via  a  construction  of  a  natural  isomorphism,  the parametrized  Schwarz  index. In the  last  section \ref{sectionburnside}, a Burnside  ring  for  noncompact  Lie  groups  is  introduced,  and  extensions  of  the  Segal  conjecture  are  discussed  in this  new  setting.

\subsection{Aknowledgments}
 The  author  would  like  to thank the  Mexican  Council  for  Science  and  Technology, CONACYT  for  economical  support in  terms  of  a  Ph.D grant. The  main  results  of this  note developed  from the  corresponding dissertation, defended  at  the  Westf\"alische  Wilhelms-Universit\"at M\"unster.  This  work   was also  supported  by  the  SFB 878 at  the  University  of  M\"unster, Wolfgang's  L\"uck Leibnizpreis, and the Hausdorff  Center  for  Mathematics  in Bonn.

\section{Review  of Equivariant  Cohomotopy  for  compact  Lie Groups. }\label{sectionreview}
\typeout{-------------------------------   Section 1: Review  of Equivariant  Cohomotopy  for  compact  Lie Groups.  --------------------------------}
In the case  of  compact  Lie  groups equivariant  cohomotopy  is  a  $RO(G)$- graded  equivariant  cohomology  theory, in the  sense  of \cite{mayeqhomcohom}. 
\begin{definition}
Let  $G$ be  a  compact Lie group  and  $X$ a  $G$-CW complex. For  any  representation $W$,  form  the  one-point  compactification $S^{W}$ and  define  the  set  of  equivariant  and  pointed  maps  $\Omega^{W}S^{W}={\rm Map}_{G}(S^{W}, S^{W}) $.  The  equivariant  cohomotopy  group  in degree $V=V_{1}-V_{2}$, where  $V_{i}$  are  finite  dimensional  real representations,   is  defined  to  be  the  abelian  group constructed  as  the  colimit  of the system  of  homotopy  classes  of  maps

 $$\pi_{G}^{V}(X)={\rm  colim}_{W} [S^{V_{1}}\wedge X_{+},\Omega^{W}S^{W\oplus  V_{2}}]$$
 where  the  system  runs  along  a  complete  $G$-universe,  that is  a  Hilbert  space  containing  as  subspaces all  irreducible representations,  where the  trivial  representation  appears  infinitely  often.    
\end{definition}

 Finite  dimensional representations  are involved  in this  definition  in  a  crucial  way  and  this is the  main  handicap  to  extend  equivariant  cohomotopy  to   more  general  settings. In fact, pathological  examples  from  group  theory \cite{olshanskii} provide  finitely  generated, discrete   groups  for  which  all finite  dimensional  representations  over  $\mathbb{R}$, or  $\mathbb{C}$ are trivial. 
Precisely:

\begin{example}
 Let  $G$  be  a  finitely  presented  group. $G$  is  said  to  be  residually  finite  if  for  every  element  $g\neq 1$, there  exists  a  homomorphism $\varphi$  to a  finite  group  mapping  $g$  to  an   element  different  from $1$.  
The  maximal  residually  finite  quotient  $G_{mrf}$  of  $G$  is  the  quotient  by  the  normal  subgroup consisting  of   the  intersection of  all  subgroups  of  finite  index. In  symbols: 
 $$G_{mrf}=\frac{G}{\cap_{(G:H)<\infty}H}$$ 
This  is  a  residually  finite  group,  characterized  by  the  property  that every  group  homomorphism  to a  residually  finite  group  factorizes  trough the  quotient map. Recall \cite{malcev}, that if  $G$  is  a  finitely  generated  subgroup  of  $Gl_{n}(F)$  for  some  field, then  $G$  is  residually  finite. This   means  that  in this  situation,  every finite  dimensional  representation of  $G$  is  induced   from  one  of $G_{mrf}$. 
An  example  of  Olshanskii, \cite{olshanskii} gives  for  every  prime  $p>10^{75}$ a  finitely  generated, infinite  group all  of  whose  proper  subgroups  are  finite  of order  $p$. That  means  that  $G$  does  not  contain  proper  subgroups  of  finite  index,  hence  $G_{mrf}=\{e\}$  and  consequently  every finite  dimensional  representation  of  $G$ is  trivial.     
\end{example}


\section{Preliminaries on  Hilbert Bundles  and  Fredholm  Morphisms}  \label{sectionhilbert}
\subsection{Elementary Equivariant  Topology}

We  recall  first  some basic definitions  and technical  facts of  equivariant  Topology. 
 
\begin{definition}
Let $G$  be  a second  countable, Hausdorff locally  compact  group. Let  $X$  be  a  second  countable, locally  compact Hausdorff space. Recall  that  a  $G$-space  is  proper  if  the  map 
$$\underset{\overset{\theta_{X}} {(g,x)\mapsto (x,gx)} } { G\times  X\to X\times X}$$ 
is  proper. 
\end{definition}

\begin{remark}
In the  case  of  Lie groups,  a  proper  action  amounts  to  the  fact  that  all  isotropy  subgroups  are  compact  and  that   a  local  triviality  condition, coded  in the  Slice  Theorem is  satisfied \cite{palais}. Specializing  to  Lie  groups  acting  properly on  $G$-CW  complexes, (see  the  definition below), these  conditions boil  down  to the  fact  that  all  stabilizers  are  compact \cite{luecktransformation},  Theorem  1.23.  In particular  for  an action  of  a  discrete  group  $G$  on  a  $G$-CW  complex,  a  proper  action  reduces   to  the  finiteness of  all  stabilizer  groups.
\end{remark}

\begin{definition}
Recall  that  a  $G$-CW  complex structure  on  the  pair $(X,A)$  consists  of a  filtration of  the $G$-space $X=\cup_{-1\leq n } X_{n}$ beginning  with $A$ and  for   which  every  space   is inductively  obtained  from  the  previous  one   by  attaching  cells  in pushout  diagrams  
$$\xymatrix{\coprod_{i} S^{n-1}\times G/H_{i} \ar[r] \ar[d] & X_{n-1} \ar[d] \\ \coprod_{i}D^{n}\times G/H_{i} \ar[r]& X_{n}}$$   
We  say  that a  proper  $G$-CW complex  is  finite  if  it  consists  of  a  finite  number  of  cells  $G/H\times  D^{n}$. 
\end{definition}

The  following result  enumerates  some  facts  which   will be  needed in the  following, which  are  proven  in chapter one  of \cite{luecktransformation}:
\begin{proposition}
Let  $(X,A)$ be  a  proper  $G$-CW  pair 
\begin{enumerate}
 \item{The  inclusion  $A\to X$  is  a   closed cofibration.}
 \item{$A$  is  a neighborhood  $G$-deformation  retract, in the  sense  that  there  exists  a neighborhood $A\subset U$, of  which $A$ is a  $G$-equivariant  deformation retract. The  neighborhood  can be  chosen to be  closed  or  open.   }
\end{enumerate}
\end{proposition}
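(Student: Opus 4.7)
The plan is to prove (i) first by induction along the skeletal filtration $A = X_{-1}\subset X_0\subset X_1\subset\cdots$ and then deduce (ii) from (i) by invoking an equivariant Neighborhood Deformation Retract (NDR) characterisation of $G$--cofibrations.

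For (i), the base of the induction is trivial since $A\to X_{-1}=A$ is the identity. For the inductive step, assume $A\to X_{n-1}$ is a closed $G$--cofibration. The non--equivariant pair $(D^{n},S^{n-1})$ is a closed NDR pair with explicit Urysohn function and deformation, and taking the product with $G/H_{i}$ (each $H_{i}$ compact) yields a closed $G$--cofibration $S^{n-1}\times G/H_{i}\hookrightarrow D^{n}\times G/H_{i}$; disjoint union preserves this. Since closed $G$--cofibrations are preserved under pushout along arbitrary $G$--maps, the pushout defining $X_{n}$ shows that $X_{n-1}\to X_{n}$ is a closed $G$--cofibration. Composing with $A\to X_{n-1}$ gives the inductive conclusion. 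Finally $X=\operatorname{colim}_{n}X_{n}$ is a sequential colimit along closed $G$--cofibrations, and these are closed under such colimits (the NDR data can be glued skeleton by skeleton with shrinking supports, or equivalently one uses that the category of $G$--spaces under $A$ admits a model structure where this is the standard cell attachment), so $A\to X$ is a closed $G$--cofibration.

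For (ii), I would apply the equivariant version of the Str{\o}m/Dold characterisation: a closed inclusion $A\hookrightarrow X$ in the category of proper $G$--spaces is a $G$--cofibration if and only if it is a $G$--NDR pair, i.e.\ there exist a $G$--invariant continuous $u\colon X\to[0,1]$ with $A=u^{-1}(0)$ and a $G$--homotopy $h\colon X\times I\to X$ with $h_{0}=\operatorname{id}$, $h_{t}|_{A}=\operatorname{id}_{A}$ and $h_{1}(\{x:u(x)<1\})\subset A$. Given such data, the open set $U=u^{-1}[0,1)$ (or any $u^{-1}[0,\varepsilon)$ with $\varepsilon<1$) is a $G$--invariant open neighbourhood of $A$ which deformation retracts $G$--equivariantly onto $A$ via $h$; the closed version is obtained by replacing $\varepsilon$ by a smaller value and taking $u^{-1}[0,\varepsilon]$.

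The main obstacle is the equivariant NDR construction for non--compact Lie groups: one must produce the invariant Urysohn function $u$ and the equivariant deformation $h$. Here properness is crucial, as it supplies the Slice Theorem of Palais and compact isotropy groups over which one can average. Skeleton by skeleton, one constructs invariant functions using tubular neighbourhoods around orbits of type $G/H_{i}$ (with $H_{i}$ compact), averages local bump functions over $H_{i}$ to obtain $H_{i}$--invariant Urysohn functions on slices, and extends these $G$--equivariantly on the resulting $G$--tubes. The compatibility of the NDR data under cell attachment (controlling the size of the collar in each step so that the infinite gluing converges) is the routine but technical heart of the argument; once this is in place, both statements follow as outlined above.
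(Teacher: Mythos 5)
The paper itself offers no argument here: it simply cites Chapter~1 of L\"uck's \emph{Transformation Groups and Algebraic $K$-Theory}, where the statement is proved by exactly the skeletal induction you describe. Your treatment of (i) is correct and standard: $(D^n,S^{n-1})$ is an NDR pair with data pulled back from the disk factor, hence automatically $G$-invariant after crossing with $G/H_i$; cofibrations are stable under coproducts, pushouts and sequential colimits; and closedness of $A$ in $X$ follows from the weak topology. Note that because the NDR data on each cell is pulled back from the non-equivariant pair, no averaging over isotropy groups or appeal to the Slice Theorem is needed for $G$-CW pairs; your last paragraph imports machinery that is only required for general proper $G$-spaces (e.g.\ smooth $G$-manifolds).

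The genuine gap is in your deduction of (ii) from (i). The Str{\o}m/NDR data $(u,h)$ attached to a closed ($G$-)cofibration satisfies $h_0=\operatorname{id}$, $h_t|_A=\operatorname{id}_A$ and $h_1(u^{-1}[0,1))\subset A$, but nothing forces $h_t$ to map $U=u^{-1}[0,1)$ into itself for intermediate $t$: the track of the homotopy may leave $U$. So what you obtain is only that $U$ is deformable into $A$ \emph{within} $X$ rel $A$, which is strictly weaker than the assertion that $A$ is a $G$-deformation retract \emph{of the space} $U$; shrinking to $u^{-1}[0,\varepsilon)$ does not repair this, since $u\circ h_t$ is not controlled. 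The standard fix --- and what the cited source actually does --- is to build the neighborhood and its retraction directly and inductively over the skeleta: $U_{-1}=A$, and $U_n$ is $U_{n-1}$ together with the points of each new cell $D^n\times G/H_i$ of radial coordinate $>1/2$ whose radial projection to $S^{n-1}\times G/H_i$ lands in $U_{n-1}$; the deformation pushes radially outward during the time interval $[2^{-(n+1)},2^{-n}]$ and then applies the previously constructed retraction. This produces an open (or, by taking radial coordinate $\geq 3/4$, a closed) $G$-invariant neighborhood of which $A$ is an honest $G$-deformation retract, with continuity of the infinite concatenation guaranteed by the weak topology. With that replacement for your second paragraph, the proof is complete.
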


We  recall the  notion  of  the  classifiying  space  for  proper  actions: 
\begin{definition}
 A  model  for  the classifying  space  for  proper  actions is  a $G$-CW  complex  $\EfinG$  with  the  following  properties: 
\begin{itemize}
 \item{All  isotropy  groups  are  compact}
\item{For  any  proper  $G$-CW  complex $X$  there  exists  up  to $G$-homotopy  a  unique $G$-map $X\to \EfinG$  }
\end{itemize}
\end{definition}

The  classifiying  space  for  proper  actions  always  exists, is  unique  up  to $G$-homotopy  and  admits  several  models. 
The  following  list   contains  some  examples. We  remite  to \cite{lueckclassifying} for  further  discussion.

 \begin{itemize}
\item{If  $G$  is  a  compact  group,  then  the  singleton  space  is  a  model  for  $\EfinG$. }
\item{Let $G$ Be  a  group  acting  on  a     ${\rm Cat}(0)$ space  $X$. Then  $X$ is a model for  $\underline{{\rm
  E}G}$. }
\item{Let  $G$  be  a  Coxeter  group. The  Davis complex  is  a model for   $\underline{ {\rm E} G }$.}
\item{Let  $G$ be  a   mapping  class  group  of  a  surface   Teichm\"uller  space  is a  model  for $\EfinG$. } 
\end{itemize}

\subsection{Fredholm  Morphisms}

We  begin  by describing cocycles  for equivariant  cohomotopy. They 
 are  defined  in terms  of  certain  nonlinear  operators  on real 
 $G$-Hilbert  bundles, so  we  briefly  recall  some  well-known
 facts on then  and  their  morphisms.
 A  comprehensive  treatment  of  Hilbert   bundles and   their  linear  morphisms  them  is 
 given  in the  book \cite{phillipsktheory}. For  matters related  to  real $C^{*}$-algebras  we  refer to  the  text \cite{schroeder}, in particular  for connections  with  Kasparov $KK$-theory.

\begin{definition}

Let  $X$ be  a locally  compact, Hausdorff   proper $G$-space. A
Banach  bundle  over $X $ is a locally  trivial  fiber  bundle  $E$
with  fiber modeled  on  a Banach  space $H$, whose  structure  group is the
set of  all  isometric  linear bijections   of $H$ with  the  strong topology.

If  $H$  is  a  (real) Hilbert  space, we   will speak  of  a  Hilbert
bundle. A  $G$-Hilbert  bundle is a  Hilbert bundle $p:E\to X$ endowed  with  a continuous action  of the  locally  compact  group $G$
in the  total  space.  The  map $p$ -called  the  projection- is  assumed  to be  $G$-equivariant
and  the  action on the  total  space  is  given  by  linear isometric  bijections. 
\end{definition}
\begin{definition}[Linear  Morphism]
Let  $E$ and $F$ be  Hilbert bundles over $X$. A linear morphism from
$E$ to $F$  is an equivariant,  continuous function $t:E\to F$ covering the
identity  on $X$ consisting  of  bounded  operators on  fibers and  for which the  fiberwise  adjoint  $t^{*}$
defined  by $\langle t^{*}x,y\rangle= \langle x,ty\rangle$  exists and  is  continuous. The  support of  a
morphism $t$ is  the  set $\{x \in X\mid tx\neq 0\}$.

\end{definition}

\begin{definition}[Compact linear  morphism]
A linear  morphism $t: E\to F$  is a compact morphism if  it is
fiberwise  compact  in the  usual  sense (that  is, for  every $x$,
$t_{x}.E_{x}\to F_{x}$ maps bounded  sets  to  relatively  compact
sets) and   in adition, for  every  point $x\in X$, there  exist  local
trivializations $a:E\mid_{U}\to U\times  E_{x}$, $b: F\mid_{U}\to
U\times  F_{x}$ such  that $bta^{-1}: U\times E_{x}\to U\times F_{x}$
is given  by  an  expresion  $(y,x)\mapsto (y, \psi(x))$ for  a  norm
continuous  map $\psi: U\to L(E_{x}, F_{x})$ into  the  bounded
linear  maps between the  fiber  over $x$. 
\end{definition}

\begin{definition}[Fredholm morphism]
A linear  morphism $t: E \to F$  is  said  to  be  Fredholm if there
exists a  morphism $s:F\to  E$ such that $st-1$ and  $ts-1$ are
compact  morphisms with compact supports. A  Fredholm  morphism  is  said  to  be  essentially  unitary  if   one  can  take $t=s^{*}$ in  the  definition.  We  recall the  existence  of $G$-invariant riemannian metrics on  vector  bundles  over  proper $G$-spaces,  which  is  proved  for  instance  in \cite{palais} as   consecuence  of  the  slice  theorem. 
\end{definition}

\begin{remark}
\begin{enumerate}

\item{Notice  that  we choose  the structure  group  of  our  bundles
  to  be  the isometric  bijections with  the  strong  operator
  topology. Other  choices, like the  weak $*$-topology  would not  give
  \emph{enough morphisms}, as  pointed  out by  Phillips in \cite{phillipsktheory}, chapter 9.}

\item{ Notice  that  we  forced  the  existence  of  adjoints  for  our
  morphisms. This  is  technically  convenient. We  also  assume  that adjoint operators exist and   are  always  continuous  in the  operator  norm, and  the  same  for morphisms  of  Hilbert  bundles.}

\end{enumerate}
\end{remark}

We  now  ennumerate  a  collection  of  facts  on linear  morphisms  and  $G$-Hilbert  Bundles which  we  will  use  later. 

\begin{proposition}[Proper  stabilization theorem] \label{proposition proper stabilization}
Let $E$ be  a  $G$-Hilbert  bundle over  a  proper  $G$-space. 
Denote  by $\mathcal{H}$  the  numerable Hilbert  space   consisting  of  the  numerable  sum of  the  space  of  square  integrable  functions  in  $G$, in symbols $\mathcal{H}=\oplus_{n=1}^{\infty}L^{2}(G)$. Let $\mathcal{H}\times X$  be  the associated  trivial Hilbert $G$-bundle. There exists  an  equivariant linear  isomorphism of $G$-Hilbert  Bundles
$$E\oplus \mathcal{H}\times  X\cong \mathcal{H}\times X$$
\end{proposition}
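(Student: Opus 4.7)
The plan is to follow the strategy of Phillips' proper stabilization theorem in equivariant $K$-theory: reduce the global statement to a local one via the Slice Theorem, absorb the bundle locally using representation-theoretic facts about $L^2(G)$ restricted to compact subgroups, and then patch with an invariant partition of unity, finishing with an equivariant Kuiper-type argument.

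First, I would establish a local model. Pick $x\in X$ with isotropy $K=G_x$ compact. By Palais' Slice Theorem, $x$ has a $G$-invariant open neighborhood $G$-homeomorphic to a tube $G\times_K S$ where $S$ is a $K$-slice through $x$. The restriction $E|_{G\times_K S}$ is induced from a $K$-Hilbert bundle $E_S\to S$. The key representation-theoretic input is that, as a unitary $K$-representation, $L^2(G)\cong L^2(K)\otimes \ell^2(K\backslash G)$ contains every irreducible $K$-representation with infinite multiplicity; hence $\mathcal{H}=\oplus_{n}L^2(G)$ restricts to a $K$-Hilbert space of the form $\mathcal{H}_K^\infty$ that absorbs any $K$-Hilbert space. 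Since $K$ is compact, the equivariant Kuiper/Dixmier--Douady theorem for compact group actions on a paracompact base gives a $K$-equivariant isomorphism $E_S\oplus(\mathcal{H}\times S)\cong \mathcal{H}\times S$. Inducing up from $K$ to $G$ produces a $G$-equivariant isomorphism on the tube $G\times_K S$.

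Second, I would globalize. Cover $X$ by a locally finite family of such tubes $U_i=G\times_{K_i}S_i$ and choose a subordinate $G$-invariant partition of unity $\{\varphi_i\}$, which exists since $X$ is a proper, second countable, locally compact $G$-space (equivalently it admits an invariant metric and $G$-paracompactness arguments apply). On each $U_i$ pick a local trivialization isomorphism $\alpha_i\colon E|_{U_i}\oplus(\mathcal{H}\times U_i)\to \mathcal{H}\times U_i$. Using the infinite direct-sum structure $\mathcal{H}=\oplus_{n=1}^\infty L^2(G)$, reindex so that distinct patches use disjoint countable blocks of summands; the classical patching trick (assemble a fibrewise block-diagonal operator weighted by $\sqrt{\varphi_i}$, taking care that the result remains an isometric bijection in the strong operator topology) yields a globally defined $G$-equivariant morphism $E\oplus(\mathcal{H}\times X)\oplus(\mathcal{H}\times X)\to \mathcal{H}\times X$ that is a fibrewise unitary.

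Finally, I would apply an equivariant Kuiper argument: the strong-operator unitary group of $\mathcal{H}$, equipped with the $G$-action by conjugation, is equivariantly contractible (this is exactly the input which justifies replacing a stable isomorphism by an honest one). From this and the previous step I conclude the desired isomorphism $E\oplus(\mathcal{H}\times X)\cong \mathcal{H}\times X$.

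The main obstacle is the globalization step. Producing the local absorption on each tube is essentially compact-group equivariant Kuiper; what requires care is assembling local isomorphisms into a globally continuous $G$-equivariant isomorphism while only using continuity in the strong operator topology (norm-continuous constructions would fail, as the remark after the definition of linear morphism already warns). The partition-of-unity assembly must therefore be carried out at the level of isometries on reindexed copies of $\mathcal{H}$, and the contractibility of the strong-operator unitary group is what converts the resulting "stable" picture into the clean isomorphism claimed.
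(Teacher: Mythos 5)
Be aware that the paper does not actually prove this proposition: its ``proof'' is a citation of Phillips (Theorem 2.9) and Schroeder (Theorem 2.1.4) for the real case, together with Phillips' Lemma 1.9 to pass between $G$-Hilbert bundles and $G$-$C_{0}(X)$-Hilbert modules. The cited argument is the equivariant Kasparov stabilization theorem at the level of countably generated Hilbert modules: properness enters through a cutoff function $c$ with $\int_{G}c(g^{-1}x)^{2}\,dg=1$, which lets one average generators into $C_{0}(X)\otimes L^{2}(G)$, and the isomorphism is built by the classical $\sum 2^{-n}$ generator-sum construction of an adjointable isometry followed by absorption. Your slice-theorem/tube/partition-of-unity route is therefore genuinely different (and, unlike the module-theoretic proof, it really does need $G$ to be Lie so that Palais' slice theorem applies; Phillips' argument works for arbitrary second countable locally compact $G$). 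The local analysis you propose --- $L^{2}(G)|_{K}\cong L^{2}(K)\otimes\ell^{2}(K\backslash G)$, hence absorption of any $K$-Hilbert bundle over a slice --- is sound and is the bundle-level shadow of the same representation-theoretic fact Phillips uses.

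There is, however, a genuine gap in your last two steps. The weighted block-diagonal assembly $\sum_{i}\sqrt{\varphi_{i}}\,\alpha_{i}$ into disjoint blocks of $\mathcal{H}=\oplus_{n}L^{2}(G)$ is an adjointable fibrewise \emph{isometry} with complemented image, not a fibrewise unitary: already with two patches and $\varphi_{1}=\varphi_{2}=\tfrac12$ the image at a point is the graph $\{(\tfrac{1}{\sqrt2}\eta,\tfrac{1}{\sqrt2}\alpha_{2}\alpha_{1}^{-1}\eta)\}$, a proper closed subspace of $B_{1}\oplus B_{2}$. So at that stage you have only a complemented embedding $E\oplus(\mathcal{H}\times X)\hookrightarrow \mathcal{H}\times X$, and equivariant contractibility of the strong-operator unitary group is not the tool that upgrades this to the claimed isomorphism --- contractibility of the unitary group controls homotopy classes of trivializations, not surjectivity of a given isometry, and for noncompact $G$ the equivariant contractibility of $U(\mathcal{H})$ is itself an unproved claim of roughly the same depth as the theorem. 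What actually closes the argument is an absorption/Eilenberg swindle: writing $\mathcal{H}\times X\cong E\oplus E'$ from the complemented embedding and using $\mathcal{H}\cong\mathcal{H}^{\oplus\infty}$ to get $E\oplus(\mathcal{H}\times X)\cong E\oplus E^{\oplus\infty}\oplus E'^{\oplus\infty}\cong E^{\oplus\infty}\oplus E'^{\oplus\infty}\cong\mathcal{H}\times X$ (equivalently, Kasparov's direct two-sided construction of the unitary). With that replacement your outline becomes a legitimate alternative proof in the Lie case.
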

 \begin{proof}
 Theorem 2.9, p. 29 of \cite{phillipsktheory}, Theorem 2.1.4  , p. 58  of  \cite{schroeder} in the real case. We  point  out  that  Phillips  realizes   this  isomorphism to  be  an  adjoinable morphism   between the  $G$-$C_{0}(X)$- Hilbert modules $\Gamma(E)\otimes_{  C_{0}(X)} \Gamma(\mathcal{H}\times  X)$ and  $\Gamma(X\times \mathcal{H})\otimes_{C_{0}(X)}  C_{0}(X)$. The  identification  of  such  a  morphism  with an  isomorphism  of  $G$-Hilbert  bundles  is  consequence  of  lemma  1.9  in \cite{phillipsktheory}.    
 \end{proof}

Next,  we  modify  Phillip's definition  of complex equivariant  $K$-theory for  proper
actions  of  locally compact  groups, \cite{phillipsktheory} to  allow  real  cocycles. The  main reference  for  technical  isssues  concernig  the passage  to  real $K$-theory  is  \cite{schroeder}.  

\begin{definition}
The  real  equivariant  $K$-theory  of  the  proper  and finite   $G$-CW  complex $X$  
$KO_{0}^{G}(X)$  is  represented  by cocycles  $(E,F, l)$, where $E$
  and   $F$  are  real  $G$-Hilbert  bundles and  $l:E \to F$  is  a
  fiberwise \emph{linear} real  Fredholm  morphism.   A  cocycle is  said
  to be \emph{trivial} if   $l$ is fibrewise  unitary. Two cocycles
  $(E_{i}, F_{i}, l_{i})_{i=0,1}$ are   equivalent  if there  exists
  a  trivial cocycle  $\tau$ such  that $l_{0}\oplus  \tau=
  l_{1}\oplus \tau $  is   homotopic  to  a trivial  morphism. 
\end{definition} 

We  now  enumerate  two consequences  of the  proper  stabilization theorem, which are fundamental  for  veryfing  excision  in  Phillips'  construction of  equivariant  $K$-theory.

\begin{proposition} \label{proposition linear extension}
 Let  $i:U\to X$ be   the  inclusion of a  $G$-invariant, open subset of $X$. Let $(E, F, t)$  be  a  linear  cocycle  over  $U$ such  that  $t$  is fibrewise bounded and  has a  bounded  Fredholm inverse. Then, there  exists a linear  cocycle $(X\times \mathcal{H}, X\times\mathcal{H}, r)$  such that the  classes  $i^{*}(r)$ and $t$ agree after adding  a  unitary  linear  cocycle. 
\end{proposition}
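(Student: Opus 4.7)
The plan is to first trivialize the Hilbert bundles by the proper stabilization theorem (Proposition \ref{proposition proper stabilization}), and then exploit the compact-support clause in the definition of a Fredholm morphism to deform $\tilde t$ to the identity on an open neighbourhood of $\partial U$; extending by the identity across $\partial U$ then produces the required Fredholm morphism on all of $X\times\mathcal H$.

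Concretely, I would first choose $G$-equivariant Hilbert bundle isomorphisms $\alpha\colon E\oplus(U\times\mathcal H)\xrightarrow{\cong}U\times\mathcal H$ and $\beta\colon F\oplus(U\times\mathcal H)\xrightarrow{\cong}U\times\mathcal H$ furnished by proper stabilization, and set $\tilde t:=\beta\circ(t\oplus\id_{\mathcal H})\circ\alpha^{-1}$. Since $\alpha$ and $\beta$ only contribute unitary (hence trivial) cocycles, it suffices to produce a Fredholm morphism $r$ on $X\times\mathcal H$ with $r|_U$ cocycle-equivalent to $\tilde t$. Let $\tilde s$ be a bounded parametrix for $\tilde t$; by the compact-support clause, both $\tilde s\tilde t-\id$ and $\tilde t\tilde s-\id$ are supported in some $G$-compact set $K\subset U$. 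Outside $K$, $\tilde t(x)$ is therefore fibrewise invertible, and the polar decomposition $\tilde t(x)=u(x)|\tilde t(x)|$ yields a continuous $G$-equivariant splitting with $u(x)$ unitary and $|\tilde t(x)|$ positive invertible.

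Next, I would pick a $G$-invariant open neighbourhood $V$ of $K$ with $\overline V$ compact and $\overline V\subset U$, and a $G$-invariant continuous cut-off $\phi\colon X\to[0,1]$ equal to $1$ on $V$ and supported in $U$. The equivariant Kuiper contractibility of the unitary group of $\mathcal H=\bigoplus L^2(G)$ provides a continuous homotopy $h_s$ on the unitary group with $h_0=\id$ and $h_1\equiv\id_{\mathcal H}$, and I would then define
\[
r(x):=\begin{cases} \tilde t(x), & x\in V,\\ h_{1-\phi(x)}(u(x))\cdot\bigl((1-\phi(x))\id+\phi(x)|\tilde t(x)|\bigr), & x\in U\setminus K,\\ \id, & x\notin U.\end{cases}
\]
On $V\setminus K$ both of the first two formulas yield $u(x)|\tilde t(x)|=\tilde t(x)$; as $\phi(x)\to 0$ the middle formula tends to $h_{1}(u(x))\cdot\id=\id$ (using norm continuity of $\tilde t$ and boundedness of $|\tilde t|$ on the compact support of $\phi$), so the three pieces patch continuously and equivariantly. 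Fibrewise, $r$ is either $\tilde t$ or a unitary times a positive invertible, hence Fredholm. Finally, the family of cut-offs $\phi_s:=(1-s)\phi|_U+s$ on $U$, together with the same formula, yields a homotopy through fibrewise Fredholm morphisms from $r|_U$ at $s=0$ to $\tilde t$ at $s=1$, giving $[i^*(r)]=[\tilde t]$ modulo unitary cocycles.

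The main obstacle is the equivariant continuity of the patched formula at $\partial U$, together with Fredholmness near the boundary. Both rely on the compact-support clause of the parametrix, which localizes the failure of invertibility of $\tilde t$ inside $K$ and makes the polar decomposition available in a neighbourhood of $\partial U$, and on the equivariant Kuiper-type contractibility of the unitary group of $\bigoplus L^2(G)$, which is needed to null-homotope $u(x)$ in a $G$-compatible way. Both inputs are tailor-made for the choice $\mathcal H=\bigoplus L^2(G)$ dictated by the proper stabilization theorem.
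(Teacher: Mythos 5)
The paper does not actually prove this proposition: its ``proof'' is a citation to Proposition 5.9 of Phillips' book (and to Schr\"oder for the real case). Your argument supplies a genuine construction, and it is the standard one underlying such extension-from-an-open-set maps: stabilize to the trivial bundle $X\times\mathcal{H}$, use the $G$-compactly supported defects $\tilde s\tilde t-1$ and $\tilde t\tilde s-1$ to conclude that $\tilde t$ is fibrewise invertible off a $G$-compact set $K\subset U$, take the polar decomposition there, rotate the unitary part to the identity with a cut-off, and extend by the identity across $\partial U$; a cut-off homotopy then identifies $i^{*}(r)$ with $\tilde t$ up to the unitary cocycles introduced by stabilization. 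This is consistent with the paper's framework and close in spirit to what Phillips does, so I regard it as a legitimate way of making the omitted proof explicit.

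Two steps need shoring up before the argument is complete. First, you apply a single contraction $h_{s}$ of the unitary group of $\mathcal{H}$ fibrewise; for $r$ to remain equivariant this $h_{s}$ would have to commute with conjugation by every $g\in G$, which is more than Kuiper-type theorems provide and is unjustified for non-compact $G$. The correct tool, already used in the paper in the proof of Proposition \ref{proposition Hilberthomotopy}, is to deform the section $x\mapsto u(x)$ cell by cell: over a cell $G/H\times D^{n}$ an equivariant unitary is determined by a map $D^{n}\to \mathcal{U}_{c}(\mathcal{H})^{H}$, and that space is contractible for compact $H$; this yields the desired relative null-homotopy of $u$ near $\partial U$ without a globally conjugation-equivariant contraction. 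Second, ``Fredholm'' in this paper is not a fibrewise notion but requires a global parametrix with $G$-compactly supported compact defects, so ``unitary times positive invertible, hence Fredholm'' is not sufficient as stated; you should exhibit a parametrix for $r$, for instance $s'=\psi\tilde s+(1-\psi)r^{-1}$ for an invariant cut-off $\psi$ equal to $1$ on $K$ and supported in $V$, which gives $s'r-1=\psi(\tilde s\tilde t-1)$ and $rs'-1=\psi(\tilde t\tilde s-1)$, both compact with $G$-compact support. With these two repairs the construction goes through.
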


\begin{proof}
Proposition  5.9, p. 74  in  \cite{phillipsktheory}. We  recall  that  the constructed  classes  agree  after  application  of  the  proper  stabilization  theorem, for  the real  modification  see  \cite{schroeder}, theorem 2.1.4 p.58 .

\end{proof}

\begin{proposition} \label{proposition Hilberthomotopy}
Let $X$ be a  proper $G$-CW  complex and $\varphi: E\to  E$ be  a  fibrewise  Fredholm  operator  defined  on  the  space  $X\times  I$. Denote by  $\varphi_{0}: E_{0} \to  E_{0}$  the   restriction  to  $X\times \{0\}$. Then: 
\begin{enumerate}

\item{There  exists  a  unitary cocycle  $\rho$ between $(E,E, \varphi)$  and  $(E_{0}, E_{0}, \varphi_{0})$. Moreover,  the  isomorphism  can be  taken  to   be  unitary  over a  fixed, invariant   subspace $A\subset X$. }
\item{Let  $A\subset X$  be  a  $G$-subcomplex and  $l:F\mid_{A}\to E \mid_{A}$ be  a   bounded  morphism. Then, there  exists a  linear cocycle   $(l^{'}, E, F)$, defined  over $X$ such  that  $i^{*}(l)$  and $l$  are  equivalent . }
\item{Let  $A\subset X$  be  a  $G$-invariant closed  subset   and  $U\subset X $ an  open  neighborhood  of  which $A$  is  a  deformation retract. Suppose  that  $(U\times K, U\times  K, l)$ is  an  essentially  unitary  cocycle  over  $U$, where  $K$ is a  strong continuous  unitary $G$-representation  in a  Hilbert  space. Then, there  exists  an  essentially unitary  cocycle $(X\times  H, X\times  H, F)$ such  that  $i^{*}(F)=l$.     }

\end{enumerate}

\end{proposition}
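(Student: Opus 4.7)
The three statements share a common mechanism: combine the Proper Stabilization Theorem (Proposition~\ref{proposition proper stabilization}) with equivariant cutoff and retraction arguments afforded by the $G$-NDR property of proper $G$-CW pairs recalled in Section~\ref{sectionhilbert}.

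For (i), the plan is to trivialize $E$ along the cylinder direction. Let $\pi:X\times I\to X$ denote the projection. Applying Proposition~\ref{proposition proper stabilization} to both $E$ and $\pi^{*}E_{0}$ over $X\times I$ yields unitary isomorphisms of each with $\mathcal{H}\times(X\times I)$. Since $X\times\{0\}\hookrightarrow X\times I$ is a $G$-cofibration and $I$ is contractible, these two stabilizations can be adjusted to coincide on $X\times\{0\}$ by pre-composing with a path in the isometric automorphism group of $\mathcal{H}$, available from its strong contractibility (invoked in~\cite{phillipsktheory}, Lemma~1.9). Composing yields a $G$-equivariant unitary $\rho:E\to\pi^{*}E_{0}$ which is the identity over $X\times\{0\}$; conjugating $\varphi$ by $\rho$ then displays it as a path of Fredholm operators on $E_{0}$ starting at $\varphi_{0}$, which is the desired unitary cocycle. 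The refinement making $\rho$ unitary over a prescribed $G$-invariant subspace $A\subset X$ is obtained by performing the stabilization relatively to $A$ via its $G$-NDR structure.

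For (ii), the plan is extension-by-retraction-and-cutoff. Because $(X,A)$ is a proper $G$-CW pair, there is a $G$-invariant open neighborhood $U$ of $A$ with $G$-equivariant retraction $r:U\to A$, and a $G$-invariant cutoff $\chi:X\to[0,1]$ equal to $1$ on a smaller neighborhood $V\supset A$ and supported in $U$ (existence follows from properness and the slice theorem of~\cite{palais}). By Proposition~\ref{proposition proper stabilization}, after stabilization both $E$ and $F$ may be identified with $\mathcal{H}\times X$, providing a global unitary Fredholm morphism $u:F\to E$. The linear morphism
\[
 l'=\chi\cdot(r^{*}l)+(1-\chi)\cdot u
\]
is defined on all of $X$, coincides with $l$ on $V$ (hence over $A$), and is fibrewise Fredholm because it equals $u$ off $\operatorname{supp}(\chi)$ and differs from $u$ by a bounded compactly supported perturbation on $U$.

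For (iii), the same cutoff technique is combined with stabilization in order to preserve essential unitarity. The technically convenient route is to stabilize $K$ by a complete $G$-universe $H$ large enough that, by the strong contractibility invoked above (\cite{phillipsktheory}, Lemma~1.9), there exists a $G$-equivariant path $l_{t}$ of essentially unitary morphisms over $U\times H\times I$ with $l_{0}=l$ and $l_{1}=\id$. One then sets $F$ equal to $l$ on $V$, to $l_{\chi(x)}$ on the collar $U\setminus V$, and to $\id$ on $X\setminus U$; essential unitarity is preserved throughout the path and the compact support of $l_{t}-\id$ lies in $U$, so the companion morphism $s_{F}$ witnessing essential unitarity of $F$ may be assembled analogously. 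The main technical obstacle in the whole proposition is the insistence on \emph{simultaneous} $G$-equivariance of every ingredient---retractions, cutoffs, and the contractibility-based paths in the unitary group---for $G$ a non-compact Lie group acting properly. This is forced through the slice theorem combined with the $G$-$C_{0}(X)$-Hilbert module refinement of the Proper Stabilization Theorem in~\cite{phillipsktheory}; once equivariant versions of each piece are in hand, the three parts reduce to standard retraction-and-cutoff arguments familiar from the non-equivariant setting.
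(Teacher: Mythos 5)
Your overall mechanism differs from the paper's. The paper does not stabilize and cut off globally; it observes that $E$ carries the weak topology with respect to the preimages of the skeleta, reduces everything to a single equivariant cell $(G/H\times D^{n}, G/H\times S^{n-1})$ with $E\cong G\times_{H}\mathcal{H}$, and then extends cell by cell using that the relevant operator spaces for the \emph{compact} isotropy group $H$ --- the equivariant unitary group $\mathcal{U}_{c}(\mathcal{H})$ (Atiyah--Segal) and the space of $H$-equivariant compact operators --- are contractible, so the extension problem over $(D^{n},S^{n-1})$ is unobstructed. Your parts (i) and (iii) are compatible with this in spirit: in (i) the discrepancy between the two trivializations over $X\times\{0\}$ can simply be pulled back along the projection $X\times I\to X$, and in (iii) you correctly interpolate along a path \emph{inside} the space of essentially unitary morphisms, so essential unitarity is preserved pointwise. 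Note, however, that the Proper Stabilization Theorem gives $E\oplus(\mathcal{H}\times X)\cong\mathcal{H}\times X$, not $E\cong\mathcal{H}\times X$, so your trivializations exist only after adding a trivial summand (harmless for the equivalence relation, but it should be said); and the contractibility you invoke is not Lemma~1.9 of Phillips (which identifies Hilbert-module morphisms with bundle morphisms) but Segal's contractibility of $\mathcal{U}_{c}(\mathcal{H})$ for compact groups, which is only available fibrewise over a slice $G/H$ --- this is precisely why the paper reduces to a cell first.

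The genuine gap is in (ii). The morphism $l'=\chi\cdot(r^{*}l)+(1-\chi)\cdot u$ is in general not Fredholm, so $(E,F,l')$ is not a linear cocycle. Writing $l'=u+\chi\,(r^{*}l-u)$, the perturbation $\chi\,(r^{*}l-u)$ is merely bounded and compactly supported over the base; bounded perturbations do not preserve the Fredholm property (only compact ones do), and a convex combination of Fredholm operators can fail to be Fredholm --- already $\tfrac{1}{2}(\operatorname{id}+(-\operatorname{id}))=0$ on a fibre where $\chi=\tfrac{1}{2}$. The repair is the same device you use in (iii) and that the paper uses throughout: interpolate along a path in a contractible space of operators of the correct type (identity plus $H$-equivariant compacts, or $H$-equivariant unitaries) rather than by linear interpolation in the space of all bounded operators, and produce that path cell by cell over $(G/H\times D^{n}, G/H\times S^{n-1})$, where the compactness of the isotropy group $H$ makes the required contractibility statements available.
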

\begin{proof}
\begin{enumerate}
\item{As the involved  bundles  are  locally  trivial $E$ carries  the  weak  topology  with  respect  to  the set $p^{-1}(X_{i})$, where $X_{i}$  is  an element  of  the  $CW$- filtration  in  the  basis (cfr. lemma  1.26 in \cite{luecktransformation}). Hence, the  statement  reduces  to the  case  where  $(X, A)= (G/H\times  D^{n}, G/H\times S^{n-1} )$ and  $E$  has  the  form  $ G\underset{H}{\times} \mathcal{H}$,  for  a  given   strong- norm  continuous and unitary  representation of  the  compact  lie  group $H$   in a  separable real  Hilbert  space $\mathcal{H}$. Let  $\mathcal{U}_{c}(\mathcal{H})$  be  the  subspace  of  the  $H$-equivariant, unitary operators $u$  in $\mathcal{H}$, for  which  the  conjugation  with  an  arbitrary  element $h^{-1}uh$  is a  continuous   operator (recall  that  this  is  a  contractible  space after results of  Segal\cite{atiyahsegal}, Apendix 3 for  the compact  Lie case). Giving  an isomorphism $\rho$ as  described  above   amounts  to  give  a  map $\rho \in {\rm Map}(D^{n}\times  I, \mathcal{U}_{c}(\mathcal{H})) $ which  is  the  identity  on $G/H\times S^{n-1}\times  \{0\}$. There  is  no  obstruction  for  doing  this  because  the  inclusion $G/H\times  S^{n-1}\to G/H\times  D^{n}$ is  a  cofibration and  $\mathcal{U}_{c}(\mathcal{H})$ is  contractible. }
\item{It  follows  from the  reduction  to  a  cell  as  above  and  the  fact  that  for  a  compact  group $H$, the  space  of  $H$-equivariant  compact  operators  is  contractible.}
\item{It  follows  from the  restriction  to a  cell as  above  and  the  contractibility  of the  equivariant  unitary  group  of  a  Hilbert  representation for  a  compact group.}
\end{enumerate}
\end{proof}

\section{Cocycles  for  equivariant stable  cohomotopy}\label{sectioncocycles}

The  second  ingredient  for  our  construction of  cocycles for
stable  cohomotopy  are some  basic  notions  of  nonlinear
functional  analysis. References to  this topic
are  the books \cite{berger} and  \cite{deimling}. 
Applications to differential equations  can be  found  in  \cite{ize}, \cite{marzantowicz}.  From  this  section  on, all  groups are  assumed  to  be Lie,   actions    are  assumed  to  be  proper  and  all  spaces   are  finite  $G$-CW  complexes.

\begin{definition}[Cocycles  for  equivariant  Cohomotopy]
 Supose  that  $k:H\to  H$  is  a  (possibly  nonlinear)  compact  map in a  (real) Hilbert  space, in the  sense  that  $k$  sends  bounded  sets  into  relatively compact  sets.
  A  map  $f=t +k:H\to  H$  is  a  compact  perturbation  of the   real Fredholm  linear  morphism $t$  if  the   preimages  of  bounded sets under  $f$ are  bounded. Let  $t:E\to  F$ be  a   Fredholm  morphism  between  the $G$-Hilbert  bundles  over  the  proper $G$- space  $X$.  A compact  perturbation  of  $t$  is  a continuous, equivariant  map $f: E\to  F$, which  has  fibrewise the form  $t+k$  for  a  compact  perturbation  of  $t$. 
\end{definition}

\begin{definition}
Let  $X$ be  a locally  compact, proper  $G$-CW  complex and  let  $G$ be  a
locally  compact  group. Let $l$  be  a representative of   a real   linear  cocycle  representing  a   fixed class in $KO_{G}^{0}(X)$ in the  sense  of  Phillips. 

A  cocycle  for  the  equivariant  cohomotopy theory of  $X$, $\Pi_{G}^{[l]}(X)$  is a four-tuple  $(E, F, l, c )$ where 
\begin{itemize}
\item{$E$, $F$ are real  $G$- Hilbert bundle  over $X$, with a  linear, real  Fredholm
  morphism $l:E\to F$ in the  sense  of  Phillips.}

\item{A  map   $c:E\to F $, lifting  the identity and  possibly nonlinear  on  each  fibre, for  which  There exist  local
trivializations $a:E\mid_{U}\to U\times  E_{x}$, $b: F\mid_{U}\to
U\times  F_{x}$ such  that $bta^{-1}: U\times E_{x}\to U\times F_{x}$
is given  by  a continuous  expresion  $(y,x)\mapsto (y, k(x))$  consisting  of  possibly  nonlinear  compact  maps, in  a  way that  
 $l+c$  extends  to  a  map  between  one-point  compactification bundles.}
   
\end{itemize}

Two cocycles $(E, F, l, c) $ and  $(E^{'}, F^{'}, l^{'},   c^{'})$  are  equivalent  if
there  is  a linear, unitary  cocycle $(H,H^{'}, u)$ such that
$E\oplus H, F\oplus H,  l\oplus u$ and  $E^{'}\oplus  H^{'},
F^{'}\oplus H^{'} l^{'}\oplus  u$  are  unitary  equivalent  as  linear
cocycles, by  an  isomorphism  which intertwines  the compact
perturbations.

   Two cocycles $(E, F, l,  c)$ and $(E, F, l, c^{'})$  are
homotopic  if  there  exists  a homotopy  $H:S^{E}\times  I  \to S^{F}$, covering  the  projection  $X\times I \to X$,  pointed  over  every  fiber and  relative  to  $l$. That means,   $H$ has  the  form   $l+h$ for  a  certain  compact map  $h: S^{E}\times  I\to  S^{F}$.  . 
such  that $H\mid_{0}=l+c$, $H\mid_{1}= l +c^{'}.$

The  set  $\Pi^{[l]}_{G}(X)$ is  called  the $G$-equivariant  cohomotopy  of $X$ in degree $[l]$. 

\end{definition}

In  several  applications  in  analysis, \cite{marzantowicz}, \cite{furuta} cocycles  have  a  diferent, but  equivalent  shape:  

\begin{remark}[The ``no zeros  on the  boundary''-picture]
Equivariant  cohomotopy  also  can  be   described  as  four-tuples $(E,F,t,k)$
 where  $t: E\to F$ is   a real  Fredholm morphism between $G$-Hilbert  bundles, with  a  choice  of  an   invariant  riemannian metric on $E$, 
$k$  is  a  \emph{non   necessary linear}, compact   fibrewise
map defined  on the  unitary  disk $D(E)=\{x\in E\mid \vert x\vert\leq 1\}$ which  is   \emph {strongly  non zero}, in the  sense  that  $kx=0$  has  no  solutions  on the  boundary of  the unit disk $\partial D=\{x\in E\mid  \vert x\vert=1\}$.
Cocycles  are  said  to  be  equivalent  if  they  agree  after  adition  of  a   linear, unitary  Fredholm  morphism. 
The  homotopy  condition  is   defined  with  homotopies  which  not  only   are assumed  to  be  compact,  but  also  strongly  non-zero and relative  to $l$, that is,  a  homotopy  is  a  map  of  the  form $l+h$, where $h$  is a compact map  defined  on $DE\times I$,  for  which  for  any  $t$, the  corresponding  map  has  no zeros  on the  boundary  
\end{remark}

The  coincidence  of  both  approaches in the  case  $X=\pt$  with  an  action  of  a  compact  lie  group, as  for  example  in  \cite{marzantowicz}   follows  from the  fact  that a   strongly  non-zero  compact  perturbation amounts  to   maps  of pairs $(D(H),\partial D(H))\to (H,H-\{0\})$,  which  are  equivalent  via  excision  and  homotopy  equivalence  to  $(S^{H},S^{H})$,  via   the intermediary  pairs $(S^{H}, H-\{0\})$ and $(S^{H}, S^{H}-{\rm Int} D(H))$. Notice  that  the  point  at infinity in the  pointed  picture is  identified  with  the  point  $(0,1)$ in the ``no-zeros on the  boundary''-picture, if  we  identify  the  pair  $D(H)\subset H\oplus \mathbb{R}$ in the  usual  way, and  everything  holds  fibrewise  and  equivariant  over  a  proper  $G$-space.

The  main  difference  between  our  definition  an the  classical  one  is  the  fact  that  our  theory  is  graded   by  the  group  $KO_{0}^{G}(X)$, instead  of  the  representation  ring.  $KO^{0}_{G}(X)$-theoretical  graded  invariants  play  a  role  in \cite{telemanokonek},  \cite{bauer} and  \cite{carlossegal}.

The  following   remark   addresses  what  may  be a generalization of  the  classical $RO(G)$-grading:

\begin{remark}[$KO_{G}^{0}(\EfinG)$-graded cohomology theories] 
An  alternative  approach  to   equivariant cohomotopy (restricting  the  grading to a  generalization  of  the  representation  ring ) can  be  constructed  as  follows: 

Let $X$  be  a  proper $G$-CW complex  and let $r: X\to \EfinG$  be  the   unique  map  up  to  homotopy  to  the  classifying  space of  proper  actions.  
Cocycles  are   four-tuples  $(r^{*}(E), r^{*}(E), r^{*}(l), c)$  which  are  declared  to be  equivalent  if  they  agree after  performing  following operations: 

\begin{condition}\label{condition triviality} 
\begin{itemize}
\item{Unitary  embedding. For  any linear cocycle  $(E,E,L)$ and  any commutative  diagram 
$$\xymatrix{l:r^{*} (E_{0}) \to  r^{*}(E _{0})  \ar[r]^-{U} \ar[d] & L:E\to E \ar[d] \\ X \ar[r] & \EfinG}$$

where  the  map $U$  is  a unitary  embedding  over  every  point,  the  cocycles  $(r^{*}(E_{0}), r^{*}(E_{0}),r^{*}(l), c)$ and $(r^{*}(E), r^{*}(E), r^{*}(L), u \circ c)$ are  equivalent. }
\item{Pullback  of  homotopies  of  isometric  embeddings over  $\EfinG$.}
\item{ Proper  homotopy   of   nonlinear  compact  maps   relative   to  the  class of  $r^{*}(l)$.}   
\end{itemize}

\end{condition}

\end{remark}

\section{Examples  of  Cocycles for  equivariant  cohomotopy}\label{sectionexamples}
I this  section  we  will  give  some  of  the  most  common  examples  of  cocycles  in  equivariant  cohomotopy from the  analytical viewpoint. The  first four  of  them  are  classical and show an increasing  degree  of  generality. The  last  example   generalizes  an invariant due  to  Stefan  Bauer  and  Mikio  Furuta in Gauge  Theory and  is  not  indispensable  for  the  rest  of the  paper.

\begin{example}[Collapse  map  related to  the Thom-Pontrjagyn Costruction]
Let  $M$   be  a  framed, orientable  $k$-dimensional  manifold  with  an  embedding  $M\to U \subset \mathbb{R}^{n+k}$, where  $U$  is  a  tubular  neighborhood. The Thom- Pontrjagyin  construction   gives  a  proper map  $c:S^{\mathbb{R}^n+k}\to S^{\mathbb{R}^{n}}$ by  collapsing  the  complement  of  $U$  to  infinity  and  using  the  framing  to  project  onto $S^{\mathbb{R}^n}$. The  cocycle  $(\mathbb{R}^{n+k}, \mathbb{R}^{n}, \rm{proj}_{\mathbb{R}^{n}}, c)$ represents  an  element  in $\Pi_{\{1\}}^{k}(\{*\})$ in  our  sense. 
\end{example}

\begin{example}[$G$-Euclidean  Neighborhood  Retracts]
Slightly  more  general,  consider a compact lie  group $G$,  and  an  \emph{equivariant  euclidean  neighborhood  retract},  that  is,  a  compactly  generated  space  $X$  with  an  embedding  $X\to  V$  into  some finite  dimensional  representation  of  $G$, and  a  retraction $r:U\to  X$  of some  open  invariant subset $U\subset  V$. In  an  analogous  situation to  the  manifold case,    Tom Dieck \cite{tomdieckrepresentation}, p.188    constructs  a   collapse map,  called  the Lefschetz-Dold  index
 $$I_{G}(X):S^{V}\to S^{V}$$

This  defines  a  cocycle $(V,V,{\rm id},I_{G}(X)\mid_{V})\in \Pi_{G}^{0}(\{*\})$
\end{example}

\begin{example}[Parametrized Fixed Point  Situations]
Let  $B$ be  a metric space with  an  action of  a  compact  lie  group $G$. An  euclidean  neighborhood retract  over  $B$ is   an equivariant, locally  trivial  fibration  $p:E\to B$  endowed  with  a fiberwise   embedding   into  a  trivial  bundle   $E\to V\times B$.  A  parametrized  fixed  point  situation  is a diagram  of  the  form 
$$\xymatrix{E\times V \supset U \ar[rr]^{f}\ar[dr]_{p\circ{\rm proj}_{E}} &  & E\times V \ar[dl]^{p\circ {\rm proj}_{E}}  \\
 & B &} $$       
where $f$  is  a  compactly  fixed  map,  in the  sense  that  the  restriction of  $p$ to  the  fixed  point  set  ${\rm Fix}(f)=\{ (e, y)\in V \mid f(e,y)=(e,0)\}$ is  a  proper map  onto $B$.
 
Under  these  hypotheses  there  exists  a $G$-invariant  neighborhood  $N$ of ${\rm Fix}(f)$ which  is  relatively  compact  and  contained  in  $U$, as  well  as  a  positive  number  $\epsilon$  such  that  $\mid  {\rm proj}_{V} (fx)-{\rm proj}_{V} x\mid\geq \epsilon$ for all  $x\in \bar{N}-N$  for  some  $G$-invariant  metric  in $V$.
 There  exists a  map  $c:U\to S^{U}  $  which  maps  the  open  ball  of  radius  $\epsilon$   and  centre  $0$  homeomorphically  into   $U$.
 Consider  the  map $k(x)=(c({\rm proj}_{V}x)-{\rm proj}_{V}f(x), x)$  defined  on  $U$,   which  is proper. 
The    element  $(B\times  V, B\times  V, {\rm  id}, k)$ represents  a cocycle  in $\Pi_{G}^{0}(B)$.          

 Parametrized  equivariant  fixed  point  theory  was  developed  in detail  in \cite{ulrich}  in the  context  of  actions  of compact  lie  groups,  generalizing  the  foundational  work  of  Dold \cite{doldfixpunkt}.

\end{example}

\begin{example}[Nonlinear  Elliptic Operators with  Symmetry]
 Let  $\Omega\subset V$ be  a $G$-invariant, relatively  compact,  open domain with  smooth  boundary   inside  some  real  representation $V$  of  the  compact  lie group $G$.  Consider  the  H\"older  spaces  $C^{k+\mu} (\bar{\Omega}, V)=C^{k}(\bar{\Omega}, \mathbb{R})\otimes  V$ for  $\mu\in (0,1)$  and  an  elliptic  linear  operator  of  order m: 
$$P=\sum_{\mid \alpha\mid\leq m}  A_{\alpha} (X)D^{\alpha}$$
where  the  matrix-valued  functions  $A_{\alpha}: \bar{\Omega}\to Hom_{\mathbb{R}}(V,V)$ are  $G$-e\-qui\-va\-riant. 
Given  a  set of  well-posed, $G$-equivariant   boundary  conditions $B$, and  any  $\mu \in (0,1)$ Marzantowicz \cite{marzantowicz} constructs  a  $G$-equivariant, Fredholm  operator between  Banach  spaces   
$$P: C_{P, B}^{k+m+\mu}(V)= \{x\in  C^{k+m+\mu}(\bar{\Omega}, V) \mid B(x)=0\} \to C^{k+\mu}(\bar{\Omega}, V)$$
 defined  in a subspace $C_{P,B}^{k+m +\mu}(V)$  of  the  H\"older space satisfying  the  boundary  conditions.

In  an analogous  situation, considering  the  elliptic  operator  in the  Hilbert  space $H^{s, 2}(\Omega, \mathbb{R})$  of   all  functions  having formal  derivatives in $L^{2}$ up  to  a  certain order  $s$    gives  a  $G$-Hilbert  space  $H^{s, 2}(\bar{\Omega}, V)=H^{s}(\bar{\Omega}, \mathbb{R})\otimes V$,  as  well  as  a  Fredholm  operator  defined  on the  space  where  the  boundary  conditions  $B$  are  satisfied,  which  we  also  denote  by  $P_{B}: H^{s+m, 2}_{B}(\bar{\Omega}, V)\to H^{s, 2}_{B}(\bar{\Omega}, V)$   

 For  any   strongly  nonzero, nonlinear equivariant function defined  on the  radius one  disk  $\Phi: D_{1}(H^{s+m}_{B}(\bar{\Omega}), )\to   H^{s}(\bar{\Omega}, V)$,  the   cocycle 
$$(H^{s+m}_{B}(\bar{\Omega}, V), H^{s}(\bar{\Omega}, V),  P_{B}, \Phi)$$ 
represents  an  element  in  the  ``no-zeros  on the  boundary''-picture  of 
$$\Pi_{G}^{\rm {ker}(P_{B}) -{\rm coker}(P_{B})}(\{*\})$$
See \cite{marzantowicz}, definition 1.2  for  the   definition  of  an  explicit  isomorphism (the  equivariant Leray-Schauder  degree)   of  an specific   kind  of these  objects  to   the   equivariant  stems  of a  compact  Lie  group. This  is  the  equivariant  version  of  the   main  result  of  the  foundational  work  of  Albert  Schwartz,  \cite{schwartz}. See  also  \cite{ize}  for applications  of  related   constructions to Hopf  bifurcation problems. 
\end{example}

\subsection{A Bauer-Furuta  invariant  for  proper  actions on  4-manifolds}
 
We  illustrate now  an  example  where  our  constructions  in terms  of  perturbation of  Fredholm  morphisms appear in a natural  way.   This approach  was used  by   Stefan  Bauer \cite{bauer} and  Mikio  Furuta.

We  recall  briefly  that  in the  context  of smooth,  riemannian oriented manifolds, the  existence  of  a  ${\rm Spin}^{c}(4)$-structure  is  always  guaranteed.  This  amounts  to  a map  from a ${\rm Spin}^{c}$  principal  bundle $Q$ together  with  a  bundle  map $Q\to  P$  to  the frame bundle  of  the tangential bundle. The  identification of the  group ${\rm Spin}^{c}(4)$  with  the sugroup $\{u_{+},u_{-}\mid u_{-}, u_{+} \in {\rm U}(2), {\rm det} (u_{+})={\rm det}(u_{-})\}$  allows  to  define positive,  respectively, negative  spinor  bundles  $S^{+}, S^{-}= Q \underset{\rho^{+,-}}{\times}\mathbb{C}^{2}$,  where $\rho_{+,-}:  Spin^{c}\to {\rm U}(2)$  are  the  respective  projections.   Using  quaternionic  multiplication, it  is  posible  to  furnish $S:=S^{+}\oplus S^{-}$  with  the  structure  of  a  module  over  the Clifford  algebra of the  cotangential bundle  $T^{*}(X)\times  S^{+,-}\to S^{-,+}$. Clifford  identities  give  a linear  map $\rho: \Lambda^{2}\to {\rm End}_{C}(S^{+})$ whose  kernel  is the  bundle  of anti-selfdual $2$-forms  and  whose  image is  the  bundle of  trace  free skew  hermintian  endomorphisms. For  any  ${\rm spin}^{c}$-connection $A$, define  the associated  Dirac  operator $D$ 
 as the  composition $\Gamma(S^{+})\underset{\nabla_{A+a}}{\to} \Gamma(S^{+})\otimes \Lambda^{1}(T^{*}M)\overset{\gamma}{\to} \Gamma(S^{-})$, where  $\gamma$ denotes  Clifford  multiplication. 
 
The  \emph{monopole  map} $\tilde{\mu}$ is  defined  for four-tuples $(A, \phi, a, f)$  of  a  ${\rm Spin}^{c}$  connection $A$, a positive  spinor $\phi$,  a  $1$-form $a$  and  a  locally  constant  function $f$ on $M$
as 

\begin{multline*}
 $$\mu: {\rm Conn}\times \Gamma(S^{+})\oplus \Omega^{1}(M)\oplus H^{0}(M)\to \\
 {\rm Conn}\times \Gamma(S^{-})\oplus \Omega^{+}(M)\oplus \Omega^{0}(M)\oplus H^{1}(M) $$ 
\end{multline*}

$$(A,\phi,a,f)\mapsto (A, D_{A+a}\phi, F^{+}_{A+a}- \sigma(\phi), d^{*}(a)+f, a_{\rm  harm}) $$

where $\sigma$ is  the tracefree  endomorphism  $(-i)(\phi\otimes \phi^{*})- \frac{1}{2}\mid \phi\mid^{2}\dot{\rm id}$, and  $F^{+}$ denotes  the  self-dual  part  of  the  curvature. 
Given  a point  in $M$, the  based gauge  group $ \mathcal{G}_{x}$ is  the  kernel  of  the  evaluation  map  at  $x$.  ${\rm map}(X, \mathbb{S}^{1})\to  S^{1}$. The  subspace  $A+ker(d)$  is  invariant  under  the  free  action  of  the  based  gauge  group. The  quotient   is  isomorphic  to the  \emph{Picard torus}, $\mathfrak{Pic}(X)= H^{1}(X,\mathbb{R})/ H^{1}(X,\mathbb{Z})$. Let $\mathcal{A}$  and  $\mathcal{B}$ be  the  quotients
$$a+\ker{d}\times  \Gamma(S^{+})\oplus \Omega^{1}(X)\oplus H ^{0}(X,\mathbb{R})/G_{x}$$
respectively

$$ a+\ker{d}\times  \Gamma(S^{-})\oplus \Omega^{+}(X)\oplus \Omega^{0}(X,\mathbb{R})\oplus H^{1}(X, \mathbb{R})/G_{x}$$
the quotient  map  $\mu: \tilde{\mu}/G_{x}: \mathcal{A}\to \mathcal{B}$  has  by  definition  a presentation  as a  fibrewise  compact  perturbation  of  a  Fredholm  operator. It is  proper   after  a  result  of  Bauer  and  Furuta, \cite{bauer}, which  essentially  uses estimates  determined by  the  Weitzenb\"ock  formula. This  gives  rise  to  a  cocycle. $\mathcal{A}, \mathcal{B},D_{A}+ d +d^{*}, c=F^{+}_{A}+a \cdot  \phi + \sigma(\phi)$, where $\sigma$  is  the  selfdual trace free endomorphism $\phi \mapsto (-i\phi\otimes \phi^{*}- \frac{1}{2}\mid \phi\mid)$. 
 \begin{example}

Suppose  $G$ is  a  (possibly  noncompact)  Lie  group acting  properly  and  cocompactly  on the  smooth ${\rm Spin}^{c}$- manifold  $M$. assume  furthermore, that  the group  preserves  the  orientation and  by  means  of  isomorphisms  of  complex ${\rm Spin}^{c}$  structures,  and  respecting  the  ${\rm Spin}^{c}$-connection. As  the  action  is  proper,  it  can  be  assumed  that $G$  preserves  the  metric. Let $\mathbb{G}$ be  the  group  of pairs  $(\varphi,u)$, where $\varphi$ is a   $G$-equivariant  diffeomorphism which  preserves both  the metric and  the  orientation  and  $u: f^{*}(\sigma_{M})\to \sigma_{M}$ is  an isomorphism of  the ${\rm Spin}^{c}$  principal  bundle. In  particular, this  gives  a description of  $\mathbb{G}$ in  the middle  of  the following  exact  sequence 

$$1\to S^{1}\to \mathbb{G}\to G\to 1 $$

In this  situation, the  class of $\mu$  is  denoted  by   
$$m_{\mathbb{G}}(X,\sigma_{X})\in \Pi_{G}^{{\rm ind}(\lambda)}(\mathfrak{Pic}(X))$$
 and we call it   the  generalized  Bauer-Furuta invariant. The   restriction   map   

$${ {\rm  res}^{S^{1}}_{\mathbb{G}}:\Pi_{\mathbb{G}}^{{\rm ind}(l)}(\mathfrak{Pic}(X)) \to \Pi^{{\rm ind}(l)}_{S^{1}}(\mathfrak{Pic}(X))\cong  \Pi_{S^{1}}^{{{\rm ind}(l)}}(\mathfrak {Pic}(X))}  $$ 
which  will  be  constructed in the  following  section maps  $m_{\mathbb{G}}$  to the  $S^{1}$-equivariant  cohomotopical Bauer-Furuta  invariant defined  in \cite{bauer}. If, moreover,  vector  bundles  do  suffice  to  represent  the  equivariant  $KO$-theory  of  the  Picard  torus,  the element $\lambda$  can be  identified after  choice  of  an  invariant riemannian metric  on the  manifold with  the  difference  of the  complex  virtual  index  bundle  of  the  Dirac  operator  and  the  trivial  bundle  of  the  space   of  selfdual harmonic  two-forms.

\end{example}
\section{Cohomological  Properties  of Equivariant Cohomotopy for  proper  actions} \label{sectioncohomology}

We describe  now  an  additive  structure  in equivariant cohomotopy theory.

Let  $(E_{0}, F_{0} l_{0}, c_{0})$ and $(E_{1}, F_{1} l_{1}, c_{1})$
be  cocycles  in  the  equivariant  cohomotopy theory of  a   given
degree $[l]$.  

Let  us  suppose without  loss of  generality  that we
have  representatives   of  the  form $(E_{0}, F_{0}, l, c_{0})$  and
$(E_{0}, F_{0}, l, c_{1})$.  

Let  $X\times  \mathbb{R}\to X\times  \mathbb{R}$ be  the  trivial bundle  and . Denote by $S^{\mathbb{R}}$  the  one  point  compactification bundle. Define  the  pinching map  $\nabla:S^{\mathbb{R}}\to S^{\mathbb{R}}\vee S^{\mathbb{R}}\approx (S^{\infty},0)\vee (S^{-\infty}, 0)$   
$${
\nabla(x)=\begin{cases} ln(-x)\; \text{$(x\in -\infty, -1]$} \\  -ln(-x) \; \text{$x\in (-1, 0]$} \\ \infty \; \text{$x=0$} \\ ln(x)\; \text{$x\in(0,1]$} \\ -ln(x) \; \text{$x\in (1,\infty)$} \\ -\infty \; \text{$x=\infty$} \end{cases}}$$ 

The  sum  of  two  cocycles is  represented  by  the  cocycle $(E_{0}\oplus 
\mathbb{R}, F_{0}\oplus \mathbb{R}, l \oplus {\rm id}, c)$, where  $l\oplus {\rm id} + c: S^{E_{0}\oplus \mathbb{R} }\to S^{F_{0} \oplus  \mathbb{R}}$ is  given  as  the composition
 \begin{multline*}
$$ S^{E_{0}}\wedge_{X}  S^{\mathbb{R}} \overset{{\rm id} \wedge_{X} \nabla} {\to} S^{E_{0}}\wedge_{X} S^{\mathbb{R}}\vee_{X}S^{\mathbb{R}} \overset{\approx}{\to} \\ S^{E_{0}\oplus  \mathbb{R}} \vee_{X} S^{E_{0}\oplus \mathbb{R}} \overset{ (l\oplus {\rm id} + c_{0} )\wedge_{X} (l\oplus  {\rm  id} +c_{1})} { \to } S^{F_{0}\oplus \mathbb{R}}$$
 \end{multline*}

The   zero  element  is  represented  by a  cocycle $(E\oplus
 \mathbb{R} ,F\oplus \mathbb{R}, l, c)$ such that $c$ extends  to a   map sending  fibrewise $S^{E}$  to the point  at  infinity. .  
 
 The  inverse  of  an  element $(E, F, l, c)$  is  represented by  the
 element  $(E, F, l, -c)$.  
 We  have  the  following  result: 
 
\begin{proposition}     
The  operations  described  above  define  an  abelian  group  structure  in  equivariant  cohomotopy.
\end{proposition}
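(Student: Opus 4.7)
The plan is to check, in order, (i) well-definedness of the sum on equivalence/homotopy classes, (ii) existence of a two-sided identity, (iii) existence of inverses, (iv) associativity, and (v) commutativity. Each property will be reduced to a corresponding statement about the pinch map $\nabla\colon S^{\IR}\to S^{\IR}\vee S^{\IR}$ applied to the trivial summand $X\times\IR$; the nonlinear, parametrized, equivariant features of the cocycles contribute only insofar as one must check that all the homotopies produced live within the class of compact perturbations of Fredholm morphisms.

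For well-definedness I would first use the proper stabilization theorem (Proposition \ref{proposition proper stabilization}) together with Proposition \ref{proposition Hilberthomotopy}(1) to replace any two cocycles by representatives sharing the same underlying linear data $(E_{0},F_{0},l)$; this is the same normalization that makes the formula $(E_{0}\oplus\IR,F_{0}\oplus\IR,l\oplus\id,c)$ unambiguous. Given a homotopy $H_{t}=l+h_{t}$ between $c_{0}$ and $c_{0}'$ relative to $l$, the map $(l\oplus\id+c_{t})\wedge_{X}(l\oplus\id+c_{1})\circ(\id\wedge_{X}\nabla)$ produces a homotopy between the two sums relative to $l\oplus\id$, and the compactness of the perturbation is preserved because $h_{t}$ is compact fibrewise and $\nabla$ is applied to a trivial one-dimensional summand. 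The unitary-equivalence part of the equivalence relation is transparent, since $\nabla$ commutes with a block-diagonal unitary on the Hilbert summand.

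For the identity, observe that if $c$ extends fibrewise to send $S^{E}$ to the point at infinity of $S^{F}$, then in the composition defining $(E,F,l,c)+(E,F,l,c')$ the first wedge factor collapses to the basepoint, and a straightforward fibrewise null-homotopy (rel $l\oplus\id$) identifies the result with $(E,F,l,c')$ after one desuspension, which is built into stabilization. The inverse $(E,F,l,-c)$ is handled by the classical argument: the map $\nabla$ followed by $(l+c)\vee(l-c)$ extends over the cone on $S^{\IR}$ obtained by linearly interpolating the two branches, producing a fibrewise null-homotopy whose nonlinear part is a compact perturbation because $h_{t}=(1-t)c+t(-c)$ is compact. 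Associativity reduces to the standard observation that the two iterated pinches $\IR\to\IR^{\vee 3}$ are homotopic; since only the trivial $\IR$-factor is affected, the induced homotopy of the cocycle sums is again a compact perturbation of $l\oplus\id$.

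Commutativity is where one must be slightly careful. After one further suspension by $\IR$, the summand $X\times\IR^{2}$ carries the trivial $G$-action, and the standard rotation in $\IR^{2}$ gives an equivariant homotopy between the swap of the two wedge summands and the identity; this is the Eckmann--Hilton-type argument producing commutativity from a double loop structure. The main technical obstacle throughout is to confirm that these classical sphere-level homotopies, when composed with the Fredholm background $l\oplus\id$, still define admissible homotopies in our cocycle category, i.e.\ compact fibrewise perturbations whose sum with $l\oplus\id$ extends to the one-point-compactification bundles. This however follows from the closure of compact fibrewise maps under composition with the fixed, proper, finite-dimensional map $\nabla$, so no new analytical input is required beyond the facts already established in Section \ref{sectionhilbert}.
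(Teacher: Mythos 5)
The paper states this proposition without any proof, so there is nothing of the author's to compare your argument against; I can only assess your sketch on its own terms. Most of it is the standard suspension-coordinate argument and is fine in outline: normalizing representatives via Propositions \ref{proposition proper stabilization} and \ref{proposition Hilberthomotopy}, the identity element, associativity via coassociativity of $\nabla$ up to homotopy, and commutativity after a second suspension are all correctly reduced to classical facts about the pinch map, and you rightly isolate the real work as checking that every homotopy produced stays within the class of compact fibrewise perturbations of the fixed linear datum. (One point you assert rather than verify: the composite with $\id\wedge_X\nabla$ is not literally of the form $l\oplus\id+(\text{compact})$, since $\nabla$ reparametrizes the $\mathbb{R}$-coordinate nonlinearly on each branch; a small rectifying homotopy is needed before the sum is again an admissible cocycle.)

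The genuine gap is the inverse. Your proposed null-homotopy of $\bigl((l\oplus\id+c)\vee_X(l\oplus\id-c)\bigr)\circ(\id\wedge_X\nabla)$ by ``linearly interpolating the two branches,'' $h_t=(1-t)c+t(-c)$, does not do what you want: a deformation of $c$ into $-c$ on one branch is (at best) a homotopy between the cocycles $(E,F,l,c)$ and $(E,F,l,-c)$, not a null-homotopy of their sum; if it were admissible it would prove $[l+c]=[l-c]$, which together with the inverse property would force $2[l+c]=0$ for every class. It is also not admissible as written: at $t=\tfrac{1}{2}$ the perturbation vanishes and $l$ alone need not extend to the fibrewise compactifications. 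The correct mechanism is the classical one: the inverse should be represented by precomposing with the reflection $r$ of the added suspension coordinate $\mathbb{R}$, so that the sum of a cocycle with its reflected copy factors through $(\id\vee r)\circ\nabla\colon S^{\mathbb{R}}\to S^{\mathbb{R}}$, which is pointed null-homotopic; smashing that null-homotopy with $\id_{S^{E_0}}$ keeps the nonlinear part compact. Even then you would owe an argument that this reflected cocycle agrees, up to the stated equivalence, with $(E,F,l,-c)$ as the paper literally claims: negating the entire compact perturbation is not obviously the same as reflecting one trivial coordinate (compare $\deg(\id_V-c)$ with $-\deg(\id_V+c)$ for $\dim V$ even in the finite-dimensional model), so this identification needs to be proved or the inverse redefined, not taken for granted.
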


There  is  a  relative  version for  pairs $(X,A) $ of  proper  $G$-CW  complexes. An element in $\Pi^{[l]}_{G}(X,A)$ is  represented  by  a  compact  perturbation  of  a  fibrewise perturbation of a Fredholm  operator $l+c: E\to F$, which  extends  to the  one-point  compactification bundles being   constant  over the  subspace $A$, with  the  value at infinity. Notice  that  this  is  consistent  with  the  usual  identification of $X$  with the  pair $(X,\phi)$.

We   construct  a  multiplicative  structure  on  the  equivariant  cohomotopy theory: 
$$ \cup: \Pi_{G}^{[l_{1}]}(X,A_{1})\times \Pi_{G}^{[1_{2}]}(X,A_{2})\to \Pi_{G}^{[l_{1}+l_{2}]}(X  , A_{1}\cup A_{2}) $$

Consider  for  this  representing  elements $u_{i}=\varphi_{i}+c_{i}\in \Pi^{[l_{i}]}_{G}(X,A_{i})$ for  $i\in \{1,2\}$, where  $c_{i}$  is  a  compact  map  accepting  fibrewise  an  extension  to the  one-point  compactification,  constant   over  $A_{i}$  with the  value  at  infinity. $u_{1}\cup u_{2}$ is   the  cocycle  defined  as  $(E_{1}\oplus E_{2}, \varphi_{1}\oplus \varphi_{2}, C)$  where  the  map $C$ is  such  that  $C:(e_{1}, e_{2})\mapsto (c_{1}(e_{1}),c_{2}(e_{2}))$. Notice  that  this map  allows  an  extension  to the  one-point  compactification. 

We  investigate  now  the  cohomological behaviour  of  equivariant  cohomotopy: 

\begin{proposition}[Functoriality]
Let $f:(X, A)\to (Y, B)$ be  a $G$- map  between  finite  $G$-CW  complexes. Then $f$ induces  a  group  homomorphism 
$$\Pi_{G}^{[l]}(Y,B)\to \Pi_{G}^{f^{*}[l]}(X,A)$$
\end{proposition}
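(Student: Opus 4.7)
The plan is to define $f^\ast$ on cocycles by fibrewise pullback of every piece of data and then verify well-definedness on equivalence classes and compatibility with the additive structure. Given a representative $(E,F,l,c)$ of a class in $\Pi_G^{[l]}(Y,B)$, form the pullback $G$-Hilbert bundles $f^\ast E$ and $f^\ast F$ over $X$; since $f$ is equivariant and $E, F$ are locally trivial, the pullbacks inherit the $G$-Hilbert bundle structure, and the pulled-back linear morphism $f^\ast l : f^\ast E \to f^\ast F$ is fibrewise identical to $l$, hence fibrewise Fredholm. Its class in $KO_G^0(X)$ is by construction the functorial image $f^\ast[l]$ in the sense of Phillips. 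Similarly, $f^\ast c$ is defined by $(f^\ast c)_x := c_{f(x)}$; this is continuous, equivariant, and the local triviality conditions in the definition of compact perturbation transfer verbatim because local trivializations of $E$ and $F$ around $f(x)$ pull back to local trivializations of $f^\ast E$ and $f^\ast F$ around $x$. The pointwise extension of $l+c$ to the one-point compactification bundle is constant at infinity over $B$; since $f(A)\subset B$, the pullback $f^\ast(l+c)$ is constant at infinity over $A$.

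For well-definedness on equivalence classes, I would consider the two types of relations separately. If two cocycles over $(Y,B)$ are stably equivalent via a unitary linear cocycle $(H,H',u)$ and an intertwining isomorphism $\psi$ of the stabilized data, then $(f^\ast H, f^\ast H', f^\ast u)$ is again a unitary linear cocycle and $f^\ast \psi$ supplies the corresponding intertwining isomorphism over $X$; note the proper stabilization theorem (Proposition \ref{proposition proper stabilization}) ensures the stabilizing bundle $\calh \times Y$ pulls back to $\calh\times X$, so no new stabilization is introduced. If $H : S^E \times I \to S^F$ is a fibrewise homotopy of the form $l+h$ relative to $l$ and pointed at infinity over $B$, then pulling $H$ back along $f\times\id_I : X\times I \to Y\times I$ produces a homotopy of the same shape $f^\ast l + f^\ast h$ between $f^\ast(l+c_0)$ and $f^\ast(l+c_1)$, pointed at infinity over $A$.

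For the homomorphism property, recall that the sum in $\Pi_G^{[l]}(Y,B)$ is realized, after stabilizing with a trivial $\IR$-factor, through the fibrewise pinching map $\nabla$ on the trivial bundle $Y\times \IR$. The trivial line bundle pulls back to the trivial line bundle $X\times \IR$, and because $\nabla$ is defined solely on the fibres of this trivial factor, it commutes on the nose with $f^\ast$. Consequently the composition defining $(l\oplus\id) + c_0 \vee c_1$ is carried by $f^\ast$ to the analogous composition over $X$, yielding $f^\ast(u_0 + u_1) = f^\ast u_0 + f^\ast u_1$. The neutral element, represented by a perturbation that extends to the constant map at infinity, is clearly preserved, as is the inversion $(E,F,l,c)\mapsto (E,F,l,-c)$.

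The main obstacle I anticipate is checking that the local-norm-continuity condition on compact perturbations (the existence of local trivializations in which the compact part is given by a norm-continuous map into the bounded operators) is genuinely preserved under pullback when $f$ is merely continuous and equivariant rather than a bundle map; here one must use that a $G$-CW map $f$ between proper finite $G$-CW complexes, together with local trivializations on a cover of $Y$, induces admissible local trivializations over the preimage cover in $X$ via the Slice Theorem, so that the pulled-back cocycle satisfies Phillips' local model. Once this is in place, the remaining verifications are formal, and functoriality in compositions $(g\circ f)^\ast = f^\ast \circ g^\ast$ and $\id^\ast = \id$ follows directly from the pullback construction.
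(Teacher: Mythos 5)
Your proposal is correct and takes essentially the same route as the paper, which simply invokes the naturality of the pullback construction for Hilbert bundles, Fredholm morphisms and compact perturbations; you have spelled out in detail exactly the verifications that this one-line argument compresses. The extra care you take with the local triviality of the compact part and with compatibility of $\nabla$ with pullback is a faithful expansion of the paper's intent rather than a different method.
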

\begin{proof}
 Follows  from the naturality  of  the  pullback  construction  for  Hilbert  bundles, Fredholm morphisms  and  compact  perturbations.  
\end{proof}

\begin{proposition}
 Let $(X,A)$ be  a  proper $G$-CW pair. There exists   a  natural  sequence 
$$\Pi^{[l]}_{G}(X,A)\overset{\rho^{*}}{\to }\Pi^{[l]}_{G}(X) \overset{i^{*}}{\to} \Pi^{[l]}_{G}(A)$$
 which  is  exact  in the  middle, where  $\rho$ and  $i$  denote  the  inclusion of $A$ into $X$  and  $X$ into $(X,A)$, respectively.  
 \end{proposition}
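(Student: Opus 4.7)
The plan is to establish the two inclusions $\im(\rho^{*})\subseteq \ker(i^{*})$ and $\ker(i^{*})\subseteq \im(\rho^{*})$ separately, with all the analytic content concentrated in the second one.

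For the inclusion $\im(\rho^{*})\subseteq \ker(i^{*})$, I would argue directly from the definition of a relative cocycle: a representative $(E,F,l,c)$ of a class in $\Pi^{[l]}_{G}(X,A)$ has the property that $l+c:S^{E}\to S^{F}$ sends every fiber over $A$ to the point at infinity. Restricting to $A$ therefore produces a fibrewise-pointed map that factors through the basepoint, hence represents the zero element of $\Pi^{[l]}_{G}(A)$. Naturality with respect to the inclusions is immediate from the naturality of pullback already recorded in the Functoriality proposition.

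For the harder inclusion $\ker(i^{*})\subseteq \im(\rho^{*})$, I start with a cocycle $(E,F,l,c)$ on $X$ whose restriction $(E|_{A},F|_{A},l|_{A},c|_{A})$ is nullhomotopic in $\Pi_{G}^{[l]}(A)$. By the proper stabilization theorem (Proposition \ref{proposition proper stabilization}) I may replace $E$ and $F$ by trivial bundles $X\times \mathcal{H}$, absorbing the stabilizing unitary summands into the linear part. A nullhomotopy of the restriction is a homotopy $H_{A}:S^{E|_{A}}\times I\to S^{F|_{A}}$ relative to $l|_{A}$, of the form $l|_{A}+h_{A}$ with $h_{A}$ fibrewise compact, such that $H_{A}|_{0}=l|_{A}+c|_{A}$ and $H_{A}|_{1}$ is fibrewise the constant map to the point at infinity. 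Since the pair $(X,A)$ is a proper $G$-CW pair, the inclusion $A\hookrightarrow X$ is a closed $G$-cofibration, so the homotopy extension property yields a continuous equivariant extension of $h_{A}$ to a fibrewise compact map $\tilde{h}:S^{E}\times I\to S^{F}$ over $X$, with $\tilde{h}|_{X\times\{0\}}=c$. The extension is performed one cell $G/H\times D^{n}$ at a time, exactly as in the cell-by-cell argument of Proposition \ref{proposition Hilberthomotopy}, using the contractibility of the space of $H$-equivariant compact maps and the fact that $A$ is an equivariant neighborhood deformation retract to guarantee that the extended $\tilde{h}$ remains fibrewise compact and that $l+\tilde{h}$ still extends to one-point compactifications. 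The map $(E,F,l,\tilde{h}|_{1})$ is then equivalent to $(E,F,l,c)$ on $X$ and, by construction, constant at infinity over $A$, hence represents a preimage under $\rho^{*}$.

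The principal obstacle is this last cell-by-cell extension: one has to extend a fibrewise compact, properness-preserving perturbation across a cell while keeping the total map $l+\tilde{h}$ a compact perturbation of the given Fredholm morphism and ensuring it still extends continuously to the one-point compactification bundles. The topological extension coming from the cofibration is automatic, but preserving the nonlinear Leray–Schauder-type conditions requires, on each cell $G/H\times D^{n}$, an equivariant partition-of-unity argument together with the contractibility of the space of $H$-equivariant compact maps into $\mathcal{H}$. This is the nonlinear analogue of parts (ii)--(iii) of Proposition \ref{proposition Hilberthomotopy} and is where all the structural properties of proper $G$-CW complexes (compact isotropy, slices, invariant metrics) are genuinely used.
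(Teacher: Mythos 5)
Your argument follows essentially the same route as the paper's own proof: the first inclusion is immediate from the definition of relative cocycles, and for the second you stabilize to trivial bundles via the proper stabilization theorem, take the nullhomotopy of the restriction over $A$, and extend it cell by cell over the finite $G$-CW structure using the homotopy extension property, concluding that the endpoint is a relative cocycle homotopic to the original. Your additional attention to preserving fibrewise compactness of the perturbation during the extension is a reasonable elaboration of the same inductive step the paper invokes.
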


\begin{proof}

That  $i^{*}\circ \rho^{*}=0$  is  clear form  the  definitions.  Let  now  $\varphi+c: E\to F$ be  a  cocycle  for  which $i^{*}([\varphi+c])$  is   compactly homotopic  to the  trivial  morphism  over  $A$.

 In view of  proposition \ref{proposition proper  stabilization}, we  can  choose a  representative (which  we  denote  by  the same  symbols) for  which  both  $E$ and $F$ are the  trivial  $G$-Hilbert  bundle $\mathcal{H}\times X \to X$ and  $c$  is  constant  over $A$  with  value  $\infty$. Using  proposition \ref{proposition linear  extension}, we  can  assume   up  to  equivalence that  the Fredholm  linear  operator extends  to  a  map $\tilde{\varphi}$ defined between bundles  defined over all points of  $X$.

 Suppose  that  there is  a  homotopy $h_{t}: i^{*}S^{E}\times  I \to i^{*}S^{F}$ defined  over  $A$  which  begins with  $i^{*}[\varphi+c]$ and  ends  with  a map  $\varphi+c$ which  sends  the   space $E$  to  the  base  point  at  infinity. As  $X-A$  is  built  up  out of  a  finite  number  of  equivariant  cells,  one  can  argue  inductively to  extend  $h$  to  a  map  $H: S^{E}\to S^{F}$, defined  on all $X$ such  that $h\mid_{A\times  I}=h$. $H$ determines  a homotopy  between  certain  element $\rho^{*}(\tilde{\varphi}+\tilde {c})$ defined  over  $X$ and $ \varphi+c$.

\end{proof}
\begin{proposition}
 Let $(X,B) $ be  a proper, finite   $G$-CW  pair obtained  as  the  pushout with  respect to  the   cellular map $(f,F):(X_{0}, A)\to (X, B)$ as  in the  following  diagram: 
$$ \xymatrix{A \ar[r]^{f} \ar[d]& B\ar[d] \\ X_{0} \ar[r]_{F}& X  }$$ 
then  the  map $(f,F)^{*}: \Pi_{G}^{[l]}(X,B)\to \Pi_{G}^{[l]}(X_{0}, A)$  induces  a natural  isomorphism. 

\end{proposition}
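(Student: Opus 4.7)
The strategy is to exhibit an explicit inverse to $(f,F)^*$ by using the pushout presentation $X = B \cup_f X_{0}$ to glue cocycles whose restriction to the attaching subcomplex $A$ is already constant at infinity. Given a representative $\alpha = (E_{0}, F_{0}, l_{0}, c_{0})$ of a class in $\Pi_{G}^{[l]}(X_{0}, A)$, I would first invoke the proper stabilization theorem (Proposition \ref{proposition proper stabilization}) to replace $E_{0}$ and $F_{0}$ by the trivial $G$-Hilbert bundle $\mathcal{H}\times X_{0}$. Since in the pushout the image $F(X_{0})$ is a closed $G$-subcomplex of $X$, part (ii) of Proposition \ref{proposition Hilberthomotopy} produces an extension of $l_{0}$ to a linear Fredholm morphism $\tilde{l}:\mathcal{H}\times X\to\mathcal{H}\times X$ whose restriction to $X_{0}$ is equivalent to $l_{0}$.

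Next, define a pointed map of one-point-compactification bundles $\beta:S^{\mathcal{H}\times X}\to S^{\mathcal{H}\times X}$ by the universal property of the pushout: on the $X_{0}$-chart let $\beta$ be the sphere-bundle extension of $l_{0}+c_{0}$, and on the $B$-chart let $\beta$ be the constant map at infinity. These descriptions agree on $f(A)\subset B$ because by hypothesis $l_{0}+c_{0}$ is already constant at infinity over $A$. The data $(\mathcal{H}\times X,\mathcal{H}\times X,\tilde{l},\beta-\tilde{l})$ then represents a cocycle in $\Pi_{G}^{[l]}(X,B)$. This construction produces a candidate inverse $\Psi:\Pi_{G}^{[l]}(X_{0},A)\to\Pi_{G}^{[l]}(X,B)$ to $(f,F)^{*}$, and by construction $(f,F)^{*}\circ\Psi=\mathrm{id}$ on representatives.

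To conclude that $\Psi\circ(f,F)^{*}=\mathrm{id}$ I would then apply the same pushout-gluing argument one dimension higher, to compact homotopies $h:S^{E}\times I\to S^{F}$ satisfying the required triviality and fibrewise-compactness conditions. Since $(X\times I,B\times I)$ is itself the pushout of $(X_{0}\times I,A\times I)\to(X\times I,B\times I)$, and since restriction and extension commute with forming the cylinder, the same argument carries through and shows that the two composites agree up to homotopy.

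The main technical obstacle is checking that $\Psi$ is well-defined on classes, not merely on representatives: the stabilizing isometry in Proposition \ref{proposition proper stabilization} and the linear extension in Proposition \ref{proposition Hilberthomotopy}(ii) are unique only up to equivalence, so one must argue that any two choices of stabilization and of extension $\tilde l$ produce equivalent cocycles over $(X,B)$. This would be handled by combining parts (i) and (iii) of Proposition \ref{proposition Hilberthomotopy}: they supply a unitary linear interpolation between the two extensions on $F(X_{0})$ and a trivialization over $B$, and these interpolations glue across $A$ because both sides are constant at infinity there. Once this well-definedness is verified, functoriality of pullback under composition of cellular pair maps makes $\Psi$ a two-sided inverse, and naturality in $(f,F)$ is immediate from the construction.
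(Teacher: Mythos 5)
Your proposal is correct and follows essentially the same route as the paper: extend the linear Fredholm part over $X$ via Proposition \ref{proposition linear extension}, extend the compact part by the constant-at-infinity value over $B$ using the pushout structure (possible since the cocycle is already at infinity over $A$), and handle the reverse composite by extending (null-)homotopies over the cylinder in the same way. The only difference is packaging: the paper proves surjectivity and injectivity of $(f,F)^{*}$ directly rather than constructing an explicit inverse $\Psi$, which lets it sidestep the well-definedness check that you correctly single out as the main technical burden of your formulation.
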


\begin{proof}
Let $(E, F\varphi+C) \in \Pi_{G}^{[l]}(X_{0}, A)$.  Due  to  propositon \ref{proposition linear extension}, it  is  possible  to  assume  that there  exists  a linear  morphism $\tilde{\varphi}:E\oplus E^{'}\to  F\oplus E^{'}$ defined  over  $X$  such  that  $F^{*}(\tilde{\varphi})$  and  $\varphi$  differ  by  addition  of  an  unitary  cocycle. As $c\mid_{A}=\infty$, it  is posible  to  extend  $c$ to a  map  $\tilde{c}$  defined  on  $X$ for  which  $\tilde{c}\mid_{B}=\infty$. Then $(F^{*}, f^{*}): \Pi_{G}^{*}(X, B)\to \Pi_{G}^{*}(X_{0}, A)$ sends $\tilde{\varphi}+\tilde{c}$ to $\varphi+c$. This  proves  surjectivity. To  prove  injectivity,  recall  that  if  $\varphi +h_{t}:  E\times  I \to E$ is a  nullhomotopy   starting  with  $(F^{*}, f^{*})(\varphi+ c)$,  ending  with  the constant  $\infty$.  As  before,  as  $c$  is  constant  over   $B$ and  $(F^{*}, f^{*})(c)$ is  trivial  over  $A$, it  is  possible  to  extend  the  map   $h_{t}$ to   a  homotopy  $\tilde{h}_{t}$ defined  over $X$ which  is  trivial  over  $B$ and  which  begins  with  $\varphi+ c$,  ends  with  a  constant  map. This  shows  that  the  map  is  injective. 
\end{proof}

\begin{proposition}[Homotopy invariance]
Let  $f_{0}, f_{1}:(X,A)\to (Y,B)$ be   two $G$-maps of  pairs  of  proper  $G$-CW  complexes. If  they  are  homotopic,  then  $\Pi^{[l]}_{G}(f_{1})=\Pi_{G}^{[l]}(f_{2})$.    
 \end{proposition}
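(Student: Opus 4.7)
The strategy I would adopt is the standard one for homotopy invariance of a cohomology-type theory: factor the comparison through a cylinder. Choose a $G$-homotopy of pairs $H : (X \times I, A \times I) \to (Y, B)$ from $f_{0}$ to $f_{1}$, let $i_{t} : (X,A) \hookrightarrow (X \times I, A \times I)$ denote the inclusion of the slice at height $t$, and use the functoriality proposition proven earlier to write $f_{j}^{*} = i_{j}^{*} \circ H^{*}$ for $j = 0, 1$. The whole problem then reduces to verifying that $i_{0}^{*} = i_{1}^{*}$ on $\Pi_{G}^{[H^{*}l]}(X \times I, A \times I)$. Note that the homotopy-invariance of Phillips' real equivariant $K$-theory gives $f_{0}^{*}[l] = f_{1}^{*}[l]$ in $KO_{G}^{0}(X)$, so both maps really do land in the same group.

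To compare the two restrictions, I would start from a representative $(E, F, \varphi, c)$ of a class over $X \times I$ and invoke Proposition \ref{proposition Hilberthomotopy}(1) with the prescribed invariant subspace $X \times \{0\}$. After adding a unitary linear summand, that proposition replaces the Fredholm datum $\varphi$ by the pullback $p^{*} \varphi_{0}$ of its restriction $\varphi_{0} = i_{0}^{*}\varphi$ along the projection $p : X \times I \to X$, and similarly identifies the Hilbert bundles with $p^{*}E_{0}$, $p^{*}F_{0}$. Under this identification, both $i_{0}^{*}(E,F,\varphi,c)$ and $i_{1}^{*}(E,F,\varphi,c)$ live on the same bundles $(E_{0}, F_{0})$ over the same linear morphism $\varphi_{0}$, and only the compact perturbations $c_{0} := i_{0}^{*}c$ and $c_{1} := i_{1}^{*}c$ differ.

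The compact map $c$, read off level by level, $t \mapsto c_{t} := i_{t}^{*}c$, is a fibrewise compact family over $X$ that extends to the one-point compactification bundles, is constant at infinity over $A$, and interpolates between $c_{0}$ and $c_{1}$ relative to $\varphi_{0}$. In other words, $\varphi_{0} + c_{t}$ is precisely a homotopy of the form permitted in the definition of equivalence of cocycles in $\Pi_{G}^{[\varphi_{0}]}(X, A)$. This yields $[i_{0}^{*}\omega] = [i_{1}^{*}\omega]$ and hence $f_{0}^{*} = f_{1}^{*}$.

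The main technical obstacle lies in the identification step. In general $(E, F, \varphi)$ is only locally trivial over $X \times I$, and one must track the compact perturbation $c$ carefully under the unitary isomorphism supplied by Proposition \ref{proposition Hilberthomotopy}(1) so that the resulting parametric family $t \mapsto c_{t}$ honestly lives between the correct bundles and still respects the value at infinity over $A \times I$. The strengthening in Proposition \ref{proposition Hilberthomotopy}(1) that the unitary isomorphism can be chosen to be the identity over a prescribed invariant subspace is exactly what is needed to absorb this bookkeeping; without it, the homotopy relating $c_{0}$ and $c_{1}$ would formally be a homotopy between cocycles defined on non-identified underlying bundles, which is not what the definition of $\Pi^{[l]}_{G}$ admits.
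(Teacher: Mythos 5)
Your proposal is correct and follows essentially the same route as the paper: both reduce the statement to comparing restrictions to the slices of the cylinder $X\times I$ (the paper phrases this as showing $\Pi_{G}^{[l]}(h)=\operatorname{id}$ for $h(x,t)=(x,0)$, which is equivalent to your $i_{0}^{*}=i_{1}^{*}$), and both rely on Proposition \ref{proposition Hilberthomotopy}(1) to trivialize the Hilbert bundle data over the cylinder relative to the zero slice so that the compact perturbation, read level by level, becomes an admissible homotopy relative to the linear part. Your write-up is in fact somewhat more explicit than the paper's about tracking the perturbation under the unitary identification and about the relative condition over $A$.
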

\begin{proof}
 In  view  of  the  naturality  of  the  construction,  this  amounts  to  prove that  $\Pi_{G}^{[l]}(h)= {\rm  id}$ for  the  map  $h: (X,A)\times  I \to (X, A)\times  I$ given by $(x,t)\mapsto (x,0)$. Let  $( E, F, l, c)$  be  a  nonlinear  cocycle representing  an element  in  ${\rm  Fix}_{G}^{*}(X,A\times  I)$. In the  notation  of  proposition  \ref{proposition Hilberthomotopy}, there  exist unitary  morphisms $u: E\to h^{*}(E)$, $v:F\to h^{*}(F)$  covering  the  identity $X\times  I\to X\times I$ such  that  the  restrictions  to $E_{0}$, $F_{0}$  are  the  respective  identities. Note  that  the  composition $f= h^{*}(E)\overset{u^{-1}}{\to} E \overset{\varphi+c}{\to}  F \overset{v^{-1}}{\to} h^{*}(F)$ is  homotopic   to $h^{*}(\varphi+ c)$  relative  to  $h^{*}(\varphi)$. After checking  out the  conditions  for  the  definition  of  $\Pi_{G}^{[l]}(X)$, one  has  that   the  equivalence classes  $\Pi_{G}^{[l]}(h)(\varphi+c)=[f] = [\varphi+c]$ agree.
\end{proof}

\begin{definition}
 Let  $[l]$ be  a class  in $KO_{0}^{G}(X)$. The  group $\Pi_{G}^{[l+1]}(X)$  is  defined  to  be   the  group  of  equivalence  classes  represented  by  cocycles   defined  over  $X$, which   admit  a  representation by  a  cocycle  $(E\oplus  \mathbb{R}, E , l\circ {\rm proj}_{E}, c)$, where  ${\rm proj}_{E}$  denotes  the  projection  onto  the  subspace  $E$. 
\end{definition}

We  construct  a  suspension  isomorphism  

$$\sigma_{[l]}^{X,A}:  \Pi_{G}^{[l]}(X, A) \to \Pi_{G}^{[l]+1}((X, A)\times (I, \{0,1\}))$$
 
Given  $l+c : E\to  F \in  \Pi_{G}^{[l]}((X,A)$, form  the  bundle $E^{'}= E\oplus \mathbb{R}$, denote  by  $p$  the  fibrewise  projection  on  $  E\oplus  \mathbb{R} \to  E$  and   define the  map  $\sigma_{[l]}(l +c): DE^{'}\times I\to E^{'}\times  I$  defined  as $(e,t)\mapsto  p \circ l + (\log(t)- (\log(-t)))(c(p(v))$. By  this  means,  we  obtain a  cocycle $(E\oplus \mathbb{R}, E,p\circ l,C)$    which  extends  to  the  fibrewise one-point compactifications, beeing  trivial  on  the required  subspace.   Given  an  element $\varphi+c \in \Pi_{G}^{[l+1]}((X,A)\times  I,\{0,1\})$, consider  a unitary, fibrewise  linear  cocycle  $u: E\to E_{0} \times  I$ $v: F\to  F_{0}$  covering  the  identity $X\times  I\to  X\times  I$,  which  restricted  over  the  subspace $ A\times  I \cup X \times \{0,1\}$  is  the  identity map. The  map constructed  as  $E_{0}\times  I \overset{u^{-1}}{\to} E\overset{\varphi+c}{\to} F\overset{v}{\to} F_{0}\times I$ determines  an  inverse  for  $\sigma^{(X,A)}_{[l]}$.  

A  coboundary map is  defined  as the  composition
\begin{multline*}
$$ \Pi_{G}^{[l]}(A)\overset{\sigma_{G}^{n}(A)}{\longrightarrow} \Pi_{G}^{[l+1]}( A\times
    I, A\times  \{0,1\}) \\ \overset{\Pi_{G}^{[l+1]}(i_{1})^{-1}
    }{\longrightarrow } \Pi_{G}^{[l+1]} (X\cup_{A\times\{0\} } A\times
    I, X\coprod A\times  \{1\} ) \\ \overset{\Pi_{G}^{[l+1]}(i_{2}) }
    {\longrightarrow} \Pi_{G}^{[l+1]}(X\cup_{A\times \{0\}} A\times I,
    A\times \{1\}) \\ \overset{\Pi_{G}^{[l+1]}({\rm
    pr}_{1})^{-1}} {\longrightarrow} \Pi_{G}^{[l+1]}(X, A)   $$      
\end{multline*}
 Where  the  map $\Pi^{n+1}_{G}(i_{1})$ is  bijective  by  excision and
 $\Pi_{G}^{n+1}({\rm pr}_{1})$  is  bijective  because  of  homotopy
 invariance.   

We  analize  now  induction  and  restriction  structures. In order  to define  the  induction  structure  in equivariant  cohomotopy,   we  restrict  ourselves  to  the  case  where  $G$  is  a  Lie  group. 

\begin{proposition}
Let  $\alpha: H\to  G$  be a proper Lie group  homomorphism . Then  there  exists a group  homomorphisn 
$$\Pi_{G}^{[l]}({\rm ind}_{\alpha}(X,A)) \to \Pi _{H}^{[l]}(X,A)$$  
satisfying 
\begin{enumerate}
 \item{Bijectivity. If  $\ker (\alpha)$  acts  freely  on  $(X, A)$, then the  map is  an  isomorphism.}
 \item{Compatibility  with  the  boundary  homomorphisms. $\delta^{[l]}_{H}\circ {\rm Ind}_{\alpha}=\ind_{\alpha}\circ\delta^{[l]}_{G}$.  }
 \item{Functoriality. If $\beta: G\to K$  is a  group  homomorphism, then  the  diagram commutes: }
 \item{Compatibility with conjugation.For  any  $g\in G$ , the  homomorphism 

$${\rm ind}_{c_(g):G\to G}\Pi^{[l]}_{G}(X,A)\to \Pi^{[l]}_{G}(\rm ind)_{c(g):G\to G}(X,A))$$

agrees  with  the  map  $\Pi^{[l]}_{G}(f_{2})$, where  $f_{2}: (X,A)\to {\rm ind}_{c(g):G\to G}$ sends $x$  to  $(1,g^{-1}x)$ and $c(g)$  is  the  conjugation  isomorphism in $G$ associated  to $g$} 
\end{enumerate}

\end{proposition}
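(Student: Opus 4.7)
The plan is to define the induction homomorphism as a pullback along the natural $H$-equivariant map $i: (X,A)\to \ind_{\alpha}(X,A)=G\times_{H}(X,A)$ sending $x\mapsto [1,x]$, where the $G$-action on the right is regarded as an $H$-action via $\alpha$. Given a $G$-cocycle $(E,F,l,c)$ over $\ind_{\alpha}(X,A)$, the restriction of structure group along $\alpha$ yields an $H$-equivariant cocycle which we then pull back to obtain $(i^{*}E, i^{*}F, i^{*}l, i^{*}c)$ over $(X,A)$. Since $\alpha$ is proper, $\ker(\alpha)$ is compact, so properness of the $H$-action on $(X,A)$ is preserved, and the Hilbert bundle / Fredholm / compact-perturbation data pull back cleanly by naturality, exactly as in the Functoriality proposition above. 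Well-definedness on equivalence classes and the fact that the construction is a group homomorphism both follow because the group operation (pinching map, direct sum of bundles) is natural and the homotopy and stabilization relations are preserved by pullback.

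For bijectivity under the assumption that $\ker(\alpha)$ acts freely on $(X,A)$, the inverse is built by genuine induction of cocycles: given an $H$-cocycle $(E,F,l,c)$ on $X$, the bundles $G\times_{H}E\to G\times_{H}X$ and $G\times_{H}F\to G\times_{H}X$ inherit $G$-Hilbert bundle structures with proper $G$-action because $\ker(\alpha)$ acts freely (hence the quotient is well-behaved and the stabilizers remain compact). The Fredholm morphism $l$ and compact perturbation $c$ extend uniquely $G$-equivariantly by the formula $[g,v]\mapsto [g, l(v)]$ and $[g,v]\mapsto[g,c(v)]$, which is consistent precisely because of $H$-equivariance combined with freeness of $\ker(\alpha)$. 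One checks $i^{*}\circ\ind = \id$ tautologically, and $\ind\circ i^{*} = \id$ via a homotopy argument using Proposition \ref{proposition Hilberthomotopy} applied over orbits $G/H$.

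Compatibility with boundary homomorphisms follows because the coboundary is assembled from the suspension isomorphism $\sigma_{[l]}^{X,A}$, excision (which we established via the pushout proposition), and homotopy invariance, and each of these three building blocks is manifestly natural in the group variable under pullback along $\alpha$. Functoriality $\ind_{\beta\circ\alpha}=\ind_{\alpha}\circ\ind_{\beta}$ comes from the canonical isomorphism $K\times_{G}(G\times_{H}X)\cong K\times_{H}X$ transported through the cocycle construction. The conjugation statement reduces to writing down the $G$-map $f_{2}:x\mapsto [1,g^{-1}x]$ and checking that the resulting pullback of cocycles matches the effect of $\ind_{c(g)}$ by a direct orbitwise computation.

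The main obstacle is the bijectivity assertion: verifying that the induced Hilbert bundle $G\times_{H}E$ really satisfies the proper-stabilization and local-triviality hypotheses used throughout Section \ref{sectionhilbert}, and that the compact perturbation $c$ extends to a cocycle in the four-tuple sense (in particular, that the extension to one-point compactification bundles is preserved). This requires invoking the Slice Theorem near orbits $G/H$ and using contractibility of the equivariant unitary and compact operator spaces for compact isotropy, exactly in the spirit of the cellwise reductions carried out in the proof of Proposition \ref{proposition Hilberthomotopy}.
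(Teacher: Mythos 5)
Your proposal follows essentially the same route as the paper: the forward map is pullback along $x\mapsto(1_{G},x)$, the inverse under freeness of $\ker(\alpha)$ is the induced-bundle (quotient) construction on cocycles, and the compatibility statements are handled by naturality and functoriality of induced bundles plus a direct check for conjugation. Your closing caveat about local triviality of the induced Hilbert bundles is exactly the point the paper itself flags in the remark following the proposition (restricting to Lie groups for this reason), so nothing is missing.
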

\begin{proof}
\begin{enumerate}
 \item{Let $\varphi+ c: E \to E$ be  a  compact perturbation over  the  space $({\rm  ind}_{\alpha}X,A)$. The  map  ${\rm i} : X\to {\rm ind}_{\alpha}(X)$ $(x \mapsto(1_{G},X)$  induces  a   group  homomorphism 
$$\Pi_{G}^{*}({\rm ind}_{\alpha} X, A) \to \Pi_{H}^{[l]}(X, A)$$. 
An  inverse is  given  by  the  map which  associates  to  a  linear  cocycle $(E, F, \varphi)$ the   cocycle  $(E/ H, F/H, \varphi /H)$. It  is  easy  to  show  that  this   is  the  case  for  the  perturbation   and  that  this  still  satisfies   the  boundedness  condition. We  point  out  that  we state this  fact  for proper  actions of  Lie  groups, see remark below. }
\item{Follows  from  the  naturality of the induced  bundle  construcctions.}
\item{Follows  from  the functoriality    of  the induced  vector bundle  construction. }
\item{Compatibility  with conjugation. Follows from  element  chasing  in the  diagram 
$$\xymatrix{\Pi_{G}^{[l]}({\rm ind}_{c(g)}  (X,A)) \ar[r]^{{\rm ind}_{c(g)}}&  \Pi_{G}^{[l]}(X, A)  \\  \Pi_{G}^{[l]}({\rm ind }_{c_(g)} (X,A))  \ar[u]^{=}\ar[ur]_{\Pi_{G}^{[l]}(f_{2})} & }$$
where $f_{2}: (X, A) \to {\rm ind}_{c(g)} (X,A)$  is  given by \\
$x \mapsto(1,g^{-1}x)$ and $c(g)(g^{-1})=gg^{'}g^{-1}.$}
\end{enumerate}
 \end{proof}

\begin{remark}
 N.C. Phillips proves  in  \cite{phillipsktheory}, corollary 8.5 , p. 131 a   more  general  result for  equivariant  $K$-theory, allowing  proper actions  of locally compact  groups as  input, instead  of  only  Lie  groups. The  adaptation  of  these  methods  is  certainly plausible. The  main problematic point  is  the  induction  structure. We  cannot  guarantee  the  local  triviality  of  bundles  $E/H\to X/H$ unless $H$ is  Lie, see \cite{palais}.   
\end{remark}

\section{The  parametrized  Schwartz-Index}\label{sectionindex}

\subsection{Review  of  Equivariant  Cohomotopy  for  Proper  Actions  of  a Discrete Group}\label{sectionlueck}

Several  approaches  have  been  proposed  towards  the  definition  of   equivariant  cohomotopy  theory  for  proper  actions. L\"uck  \cite{lueckeqstable} uses finite  dimensional  bundles. This allows  to  deal  with the  difficulties appearing  in the   case  where  a  discrete group  acts on   a  finite  $G$-CW  complex.  We  briefly  recal  this  approach

Fix an equivariant, proper  $G$-CW complex. Form the category

${\rm SPHB}^{G}(X)$ having  as  objects  the $G$-sphere bundles over $X$. A
morphism from $\xi:E\to X$ to $\mu:F\to X$ is a bundle map $S^{\xi}\to
S^ {\mu}$ covering  the identity in $X$, which preserves fiberwise  the
basic points. A  homotopy between the  morphisms $u_{0}$, $u_{1}$ is a
$G$-bundle map $h:S^{\xi}\times [0,1]\to S^ {\mu}$ from the bundle
$S^{\xi}\times [0,1]\to [0,1]\times X$  to the  bundle $S^{\mu}$
covering   the  projection $X\times [0,1]\longrightarrow X$ and
preserving the base points on  every  fiber such that its  restriction
to $X\times \{i\}$ is $u_{i}$ for $i=0,1$.
Let $\underline{\mathbb{R}^{n}}$ be  the trivial vector bundle  over
$X$, which is  furnished with the trivial action of  $G$. Two
morphisms  of  the  form
$$S^{\xi_{i}\oplus \underline{\mathbb{R}^{k_{i}}}}\to S^{\xi_{i}\oplus
  \underline{\mathbb{R}^{k_{i}+n}}}$$

are said to be equivalent  if there are objects $\mu_{i}$ in
  ${\rm SPHB}^{G}(X)$ and an isomorphism of vector bundles $\nu: \mu_{0}\oplus
  \xi_{0}\cong \mu_{1}\oplus \xi_{1}$  such that  the following
  diagram of morphisms  in ${\rm SPHB}^{G}(X)$ commutes up to homotopy
$$ \xymatrix{ S^{\mu_{0}\oplus\underline{\mathbb{R}^{k_{1}}}} \wedge_{X}
      S^{\xi_{0}\oplus\underline{\mathbb{R}^{k_{0}}}}
      \ar[rr]^{\rm{id} \wedge_{X} u_{0}} \ar[d]_{\sigma_{1}} &  &
      S^{\mu_{0}\oplus\underline{\mathbb{R}^{k_{1}}}} \wedge_{X}
      S^{\xi_{0}\oplus \underline{\mathbb{R}^{k_{0}+n}}}
      \ar[d]^{\sigma_{2}} \\    S^{\mu_{0}\oplus \xi_{0}\oplus
      \underline{\mathbb{R}^{k_{0}+k_{1}} }} \ar[d]_{S^{\nu\oplus
      \rm{id}}} &  &  S^{\mu_{0}\oplus \xi_{0}\oplus
      \underline{\mathbb{R}^{k_{0}+k_{1}+n} }}\ar[d]^{S^{\nu \oplus
      \rm{id}}}  \\      S^{\mu_{1}\oplus \xi_{1}\oplus
      \underline{\mathbb{R}^{k_{0}+k_{1}} }}
      \ar[d]_{\sigma_{3}}&  & S^{\mu_{1}\oplus \xi_{1}\oplus
      \underline{\mathbb{R}^{k_{0}+k_{1}+n} }} \ar[d]^{\sigma_{4}} \\
      S^{\mu_{1}\oplus\underline{\mathbb{R}^{k_{0}}}} \wedge_{X}
      S^{\xi_{1}\oplus\underline{\mathbb{R}^{k_{1}}}} \ar[rr]_{{\rm id}
      \wedge_{X} u_{1}} &  & S^{\mu_{1}\oplus\underline{
      \mathbb{R}^{k_{0}}}}\wedge_{X}
      S^{\xi_{1}\oplus\underline{\mathbb{R}^{k_{1}+n}}}}$$

where the isomorphisms $\sigma_{i}$ are determined by  the fiberwise
      defined  homeomorphism $S^{V\oplus W}\approx S^{V}\wedge S^{W}$
      and  the  associativity  of  smash products,  which  holds  for
      every  pair  of representations $V$,$W$. 
We  recall now  W. L\"uck's definition of  equivariant  cohomotopy: 

\begin{definition}
Let $X$ be a $G$-CW complex, where $G$ is a  discrete  group and  $X$  is  finite. We define its $n$-th G-equivariant stable
cohomotopy  group  $\pi_{G}^{n}(X)$ as  the set  of homotopy  clases of equivalence
classes of morphisms
$u:S^{\xi\oplus\underline{\mathbb{R}^{k}}}\to
  S^{\xi\oplus\underline{\mathbb{R}^{k+n}}}$ under  the  above
  mentioned relation. For a $G$-CW pair, $(X,A)$ we   define $\Pi_{G}^
  {n}(X,A)$ as  the equivalence classes of  morphisms  which  are trivial over
  $A$, i.e. those  which are  given by a  representative
  $u:S^{\xi\oplus\underline{\mathbb{R}^{k}}} \to
  S^{\xi\oplus\underline{\mathbb{R}^{k+n}}}$  which satisfies  that
  over
  every  point  $a\in A$, the  map $u_{a}:S^{\xi_{a}\oplus
    \mathbb{R}^{k}}\to  S^{\xi_{a}\oplus\underline{
    \mathbb{R}^{k+n}}}$ is constant with value the  base point.
For a pair  of bundle morphisms $u:S^{\xi\oplus\underline{\mathbb{R}^{k}}}\to
  S^{\xi\oplus\underline{\mathbb{R}^{k+n}}}$,
  $v:S^{\xi^{'}\oplus\underline{\mathbb{R}^{k}}}\to
  S^{\xi^{'}\oplus\underline{\mathbb{R}^{k+n}}}$, the  sum  is
  defined as  the  homotopy  class of the  morphism 
\begin{multline*}
$$u:S^{\xi\oplus \xi^{'}\oplus \underline{\mathbb{R}^{k}}}\wedge_{X}S^{\mathbb{R}}
  \overset{{\rm id} \wedge_{X} \nabla }{\to} S^{\xi\oplus
  \xi^{'}\oplus \underline{\mathbb{R}^{k}}}\wedge_{X}(S^{\mathbb{R}}\vee_{X}
  S^{\mathbb{R}} ) \overset{\sigma_{3}}{\to} \\ ( S^{\xi \oplus
  \underline{\mathbb{R}^{k} }}\wedge_{X} S^{\mathbb{R}} ) \vee_{X}(
  S^{\xi^{'} \oplus\underline{\mathbb{R}^{k} }}  \wedge_{X}
  S^{\mathbb{R}}) \overset{(u\wedge_{X} id)\vee_{X}
  (v\wedge_{X}id)}{\to} \\  S^{\xi\oplus
    \xi^{'}\oplus \underline{\mathbb{R}^{k+n}}}\wedge_{X}S^{\mathbb{R}}$$ 
\end{multline*}

where  $\sigma_{3}$ is the canonical isomorphism  given
by the  fiberwise distributivity  and  associativity  isomorphisms
and $\nabla$ denotes  the  pinching map 
$S^{\mathbb{R}}\to S^{\mathbb{R}}\vee S^{\mathbb{R}}$. The
    relative  version for  elements  lying  in the  group  of a
    pair, $\pi^{n}_{G}(X,A)$ translates word by  word  when one sets all
    sphere  bundles and  morphisms  to be trivial over $A$.      
\end{definition}

Our  approach   extends  the  notions proposed  first  by  L\"uck   and  solves  its  principal problem, namely:   the  the lack  of  finite dimensional  $G$-vector  bundles  to  represent  excisive  $G$-cohomology theories. The  crucial  result  in this  is  the  construction  of  an  index theory  in  the  context of  parametrized  nonlinear  analysis. Previous  versions  of  this  index  theory  (restricted  to the  unparametrized and   $S^{1}$-equivariant  case)   were  constructed  in \cite{bauer},  from  where  we  adopt the  crucial ideas.
 
We  prepare  its  definition  with  the  following   facts, which   are parametrized  versions  of  the  discussion in  page  5  and   Lemma  2.5 in \cite{bauer}. We  also  use the  notation from that  article. In  this  section, all  groups  are  discrete  and  all $G$-CW  complexes  are  finite.

\begin{definition}
Let    $l: E \to E$ be a  Fredholm  morphism over  a  proper  $G$-CW  complex  $X$,  equipped  with  some  invariant  riemannian metric. For  any  finite  dimensional $G$-vector  subbundle $\xi$ embedded  as  direct  summand, the  orthogonal spherical retraction associated  to $\xi$,   $\rho_{\xi}:S^{E}-S^{\xi^{\perp}}\to S^{\xi}$  is  defined  to  be  the   map given  on  every  fiber $x$ as $w\mapsto \vert w\vert {\rm Proj}_{\xi}(w)$   
\end{definition}

Using  appropiate  rescaling  homotopies  we  have: 

\begin{proposition}
The    homotopy  type  of an orthogonal  spherical  retraction  does  not  depend   on the  choice  of  a  riemannian  metric.  
\end{proposition}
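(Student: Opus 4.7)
The plan is to exploit the convexity of the space of $G$-invariant Riemannian metrics on $E$ together with the continuous dependence of the orthogonal projection onto $\xi$ on the choice of metric. First, I would note that given two $G$-invariant Riemannian metrics $g_0$ and $g_1$ on $E$, the convex combination $g_t := (1-t) g_0 + t g_1$ is a continuous one-parameter family of $G$-invariant Riemannian metrics interpolating between them; $G$-invariance is preserved because the set of $G$-invariant inner products is convex, and existence of such metrics over a proper base is recalled in \cite{palais}.

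For each $t$, the orthogonal projection $\mathrm{Proj}_{\xi}^{g_{t}} : E \to \xi$ is the unique $g_t$-self-adjoint projection onto $\xi$, and the assignment $t \mapsto \mathrm{Proj}_{\xi}^{g_{t}}$ depends continuously on $t$ as a family of $G$-equivariant fiberwise bundle endomorphisms; in particular, the orthogonal complements $\xi^{\perp_{t}}$ vary continuously with $t$. I would then introduce the parametrized rescaled projection defined fiberwise by
\[
R_{t}(v) \; = \; |v|_{g_{t}}\, \frac{\mathrm{Proj}_{\xi}^{g_{t}}(v)}{|\mathrm{Proj}_{\xi}^{g_{t}}(v)|_{g_{t}}}, \qquad v \in E \setminus \xi^{\perp_{t}}.
\]
Each $R_{t}$ is $G$-equivariant, proper on its domain, and tends fiberwise to the point at infinity as $v$ approaches $\xi^{\perp_{t}}$; hence it extends to a pointed $G$-map $\rho_{\xi}^{g_{t}} : S^{E} \to S^{\xi}$ that agrees with the orthogonal spherical retraction of the statement.

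By joint continuity of the above formula in $(v,t)$, the family $\{\rho_{\xi}^{g_{t}}\}_{t \in [0,1]}$ assembles into a pointed $G$-map $H : S^{E} \wedge I_{+} \to S^{\xi}$ realizing the required $G$-homotopy from $\rho_{\xi}^{g_{0}}$ to $\rho_{\xi}^{g_{1}}$. The main obstacle will be verifying continuous extension across the singular locus $\{(v,t) : \mathrm{Proj}_{\xi}^{g_{t}}(v) = 0\}$, which amounts to uniformity in $t$ of the decay of $\mathrm{Proj}_{\xi}^{g_{t}}(v)$ near $\xi^{\perp_{t}}$. This should follow from a cell-by-cell induction along the lines of Proposition \ref{proposition Hilberthomotopy}, reducing to the case $X = G/H \times D^{n}$ where $\xi$ is trivial and $g_{t}$ becomes a smooth family of $H$-invariant inner products on a finite-dimensional representation; the rescaling homotopies alluded to in the statement then absorb any remaining discrepancies, giving the sought-for independence of the $G$-homotopy class of $\rho_{\xi}$ from the chosen metric.
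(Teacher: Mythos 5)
The paper offers no argument for this proposition at all: it is stated immediately after the phrase ``Using appropriate rescaling homotopies we have,'' with no proof environment. Your strategy --- convexity of the set of $G$-invariant fiberwise inner products, continuity of the $g_t$-orthogonal projection onto the finite-dimensional summand $\xi$ in $t$, and assembly of the resulting family of retractions into a $G$-homotopy --- is the natural way to substantiate that one-line appeal, and in spirit it is surely what the author intends by ``rescaling homotopies.'' The ingredients you invoke (existence of invariant metrics from \cite{palais}, reduction to cells $G/H\times D^{n}$ as in Proposition \ref{proposition Hilberthomotopy}) are the right ones.

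There is, however, one concrete false step. For your normalized formula $R_{t}(v)=|v|_{g_t}\,\mathrm{Proj}^{g_t}_{\xi}(v)/|\mathrm{Proj}^{g_t}_{\xi}(v)|_{g_t}$ one has $|R_{t}(v)|_{g_t}=|v|_{g_t}$, so as $v$ approaches a finite-norm point of $\xi^{\perp_t}$ the image stays in a bounded set of $\xi$ while its direction oscillates; it does \emph{not} tend to the point at infinity, and consequently $R_t$ does not extend to a pointed map $S^{E}\to S^{\xi}$, nor does your $H:S^{E}\wedge I_{+}\to S^{\xi}$ exist as described. (The same defect is present in the paper's own formula $w\mapsto |w|\,\mathrm{Proj}_{\xi}(w)$, which is why the retraction is only defined on $S^{E}-S^{\xi^{\perp}}$.) Two standard repairs: either keep the varying domains $S^{E}-S^{\xi^{\perp_t}}$ and identify them along the path by the continuous family of fiberwise linear $G$-automorphisms $A_t$ (positive square roots of the Gram operators) that preserve $\xi$ and carry $\xi^{\perp_0}$ onto $\xi^{\perp_t}$, so that the homotopy is taken in the category of maps of pairs $(S^{E},S^{\xi^{\perp}})\to (S^{\xi},\infty)$; or replace the formula by one that genuinely diverges near $\xi^{\perp_t}$, e.g.\ $v\mapsto \mathrm{Proj}^{g_t}_{\xi}(v)\big/\bigl(1-|v|_{g_t}^{-1}|\mathrm{Proj}^{g_t}_{\xi}(v)|_{g_t}\bigr)$, for which the extension by the basepoint is continuous. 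With either correction your interpolation argument goes through and proves the statement in the only form in which it is actually used later (namely for the composites $\rho_{\tau}\circ f|_{l^{-1}(\tau)}$ of Proposition \ref{proposition nonlinearapproximation}, whose domains are fixed).
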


The  following  is  a  parametrized  version  of  lemma 2.5  in \cite{bauer}, compare  also Corollary  3.11 in \cite{telemanokonek}: 

\begin{proposition}\label{proposition nonlinearapproximation}
Let $G$  be  a  discrete  group  acting  on a  finite  $G$-Cw  complex. Let  $f=l+c: E\to E $  be  a fibrewise compact  perturbation  of the  linear Fredholm morphism $l$ defined  in a $G$- Hilbert bundle $E$  over  the  proper $G$-space $X$.  Then,  there  exist finite  dimensional $G$-vector  bundles $\xi \subset E$ such  that
\begin{itemize}
 \item {$\xi$  spans together  with  the image  of  $l$  the $G$-Hilbert bundle  $E$.}
 \item {Given a  fibrewise  inclusion  as  orthogonal  summand  in a  finite  dimensional  vector  bundle  $\xi\to \tau$,  $\xi\oplus \zeta = \tau$, the  restricted  map $f:S^{l^{-1}(\xi)}\to S^{\xi}$  sends the  unit  sphere  $S^{\xi^{\perp} }$ to  the  orthogonal  complement  of  $\xi$.}
 \item {The  maps  $\rho_{\tau} f\mid_{l^{-1}(\tau)}$  and $id_{\zeta}\wedge_{X} \rho_{\xi}$  are $G$-homotopic  over $X$. }
\end{itemize}
\end{proposition}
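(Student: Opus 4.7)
The plan is to mimic the standard finite-dimensional approximation of a compact perturbation, but in the parametrized, equivariant setting dictated by the finite proper $G$-CW structure on $X$. First I would produce the finite dimensional $G$-subbundle $\xi$. Since $l$ is Fredholm, its fibrewise cokernels form a continuous family of finite dimensional spaces. Working inductively along the $G$-CW filtration $X_{-1} \subset X_0 \subset \cdots \subset X_n$ and using that every cell has the form $G/H \times D^k$ with $H$ finite, I would, over each cell, equivariantly pick finitely many vectors that generate the cokernel of $l$ at the cell centre; by norm-continuity of $l$ (together with local triviality of $E$ ensured via Proposition \ref{proposition proper stabilization}) these extend to a $G$-invariant finite dimensional subbundle over a $G$-neighborhood of the cell. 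Patching via an equivariant partition of unity (available because the action is proper and $X/G$ is compact) yields a finite dimensional $G$-subbundle $\xi_{1} \subset E$ with $\xi_{1} + \operatorname{im}(l) = E$.

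Next I would absorb the compact perturbation $c$ into $\xi$. Because $c$ is fibrewise compact and has the local form $(y,x) \mapsto (y,\psi(y)(x))$ with $\psi: U \to L(E_x,E_x)$ norm continuous, the image $c(D(E))$ of the closed unit disk bundle is totally bounded on each cell. Using compactness of $G\backslash X$ and again the cell-by-cell $G$-equivariant structure, I would enlarge $\xi_{1}$ to a finite dimensional $G$-subbundle $\xi \supset \xi_{1}$ with the property that $\|\,c - \rho_{\xi} \circ c\,\| < \varepsilon$ uniformly over $D(E)$, for any preassigned $\varepsilon > 0$. With $\xi$ so chosen, $l^{-1}(\xi)$ (resp.\ $l^{-1}(\tau)$) is a finite dimensional $G$-vector subbundle of $E$ of rank $\dim\xi + \operatorname{ind}(l)$, and the composition $\rho_{\tau} f|_{l^{-1}(\tau)}$ extends to the one-point compactifications, giving a well-defined fibrewise map $S^{l^{-1}(\tau)} \to S^{\tau}$.

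The second and third bullets then follow from the compatibility of $l$ with the orthogonal splitting $\tau = \xi \oplus \zeta$. Since $\zeta \perp \xi$ and $l$ is linear, the contribution of vectors lying in the $\zeta$-direction of $l^{-1}(\tau)$ to $\rho_{\tau} f$ is, modulo the $\varepsilon$-error, exactly the $\rho_{\zeta}$-projection, which shows that $S^{\xi^{\perp}} \cap S^{l^{-1}(\tau)}$ is sent into the complement of $\xi$; a straight-line homotopy in $\tau$, between $\rho_{\tau} f|_{l^{-1}(\tau)}$ and $\operatorname{id}_{\zeta} \wedge_{X} \rho_{\xi}$, staying away from the zero section thanks to the $\varepsilon$-closeness and to properness of $f$, yields the required $G$-homotopy.

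The main obstacle is the first stage: in the setting of proper actions of (discrete) groups on finite $G$-CW complexes, not every class in $KO_{G}^{0}(X)$ admits a finite dimensional representative, which is precisely the phenomenon motivating the paper. Here, however, I do not need a $K$-theoretic representative; I only need one sufficiently large finite dimensional $G$-subbundle dominating $\operatorname{coker}(l)$ and a neighborhood of $c(D(E))$, and this is produced by a local construction on each equivariant cell (where the stabilizer is finite and the space of equivariant sections is non-empty and contractible) followed by a finite equivariant patching argument. The analogous equivariant finite-dimensional approximation of $c$ used in the second stage is then routine once one works orbitwise via the slice theorem.
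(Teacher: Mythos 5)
Your construction of $\xi$ is essentially the paper's: the paper also takes $\xi = l(E)^{\perp}\oplus\langle\phi_{1},\dots,\phi_{n}\rangle$, where the $\phi_{i}$ are finitely many sections forming an $\varepsilon$-net for the fibrewise compact set $C_{x}=\overline{c(f^{-1}(D_{x}))}$; your two-stage, cell-by-cell patching over the finite $G$-CW structure is just a more explicit way of producing those sections, and in fact supplies detail the paper asserts without proof. Your argument for the second bullet (the image of $f$ on the unit sphere lies within $\varepsilon$ of $\tau$, hence at distance $>3/4$ from $S^{\tau^{\perp}}$) is also the paper's.

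The one genuine gap is the final homotopy. You propose a single straight-line homotopy in $\tau$ between $\rho_{\tau}f|_{l^{-1}(\tau)}$ and $\operatorname{id}_{\zeta}\wedge_{X}\rho_{\xi}f|_{l^{-1}(\xi)}$, justified by ``$\varepsilon$-closeness and properness of $f$''. But these two maps differ in \emph{two} independent ways: the projection applied to the output of $c$ (full identity versus $\operatorname{pr}_{\xi}$) and the argument fed into $c$ (the full vector $w$ versus $\operatorname{pr}_{l^{-1}(\xi)}w$). Your $\varepsilon$-estimate $\|c-\rho_{\xi}\circ c\|<\varepsilon$ controls only the first change; it says nothing about $\|c(w)-c(\operatorname{pr}_{l^{-1}(\xi)}w)\|$, which can be of order $1$ when the $\zeta$-component of $w$ is moderate, so the straight-line interpolant is not obviously kept away from $S^{\tau^{\perp}}$ and need not stay proper. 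This is exactly why the paper (following Bauer's Lemma 2.5) uses a three-stage affine homotopy: first deform $c$ to $\operatorname{pr}_{\xi}\circ c$ (controlled by your $\varepsilon$-estimate), then deform the argument of $c$ through $\operatorname{pr}_{l^{-1}(\xi)}$ (controlled because the $l(w_{\zeta})$ summand is untouched and the output of $c$ now lies in $\xi$), and finally adjust the $\zeta$-direction; one then extends from the disk bundle using contractibility of $S^{E}_{x}\setminus(D\cap\tau^{\perp}_{x})$. You should replace the straight-line homotopy by such a staged one, or at least prove separately that each of the two changes can be made through nonvanishing proper maps.
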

\begin{proof}
 
\begin{enumerate}
  \item {Denote  by $D$ the  unit  ball bundle in $E$. Due  to  the  boundedness  condition,  the  map $f^{-1}(D_{x})$ is  bounded over  any  point  $x\in X$. Hence, the  closure  $C_{x}$ of  its  image  under  the  compact  map  $c$  is  fibrewise  compact. It  follows that  there exist continuous  sections  $\{ \phi_{i}: X\to E\}_{i=1}^{n}$  such  that  $C_{x} \subset B_{\epsilon}(\phi_{i}(x)$.  One  can  furthermore  assume  that  $\{phi_{i}(x)\}$  form a  linearly  independent set and the linear  subspace  spanned  by  them  intersects  trivially the orthogonal complement of  the image,   $l(E)^{\perp}$ on each  fiber.  The  subbundle  given  as  the  direct  sum $l(E)^{\perp}\oplus \langle \phi_{1}, \ldots,\phi_{n}(x)\rangle$ satisfies  the  required  properties. }
  \item{If $w_{x}\in  S^{\tau^{\perp}}_{x}$  is  in the image  of  $f\mid_{S^{l^{-1}(\tau)}}$, then  $f^{-1}(w_{x}))\cap l^{-1}(\tau)$ will be  mapped  under $f\mid_{l^{-1}}$  to a  subspace  of  $\tau_{x}+ C_{x}$. So, $w_{x}$ will be contained  in $S^{\tau^{\perp}}\cap \tau_{x}+C_{x}$, which  is  not  possible, because  the  distance  between  these  subspaces  is  greater  than  $1-\epsilon_{x}> \frac{3}{4}$.}
  \item{In  view  of  the  slice  theorem and  the  local  triviality  of  the  $G$-Hilbert  bundles  involved,  we  can  cover  the  space  $X$  with   invariant  neighborhoods  for  which  there  is a  map  $U_{x}\to G/H$, and  the  bundle  over $U_{x}$  is  the  pullback   of  the  bundle  $G\underset{H}{\times} E_{x}\to  G/H$ ,  where  $E_{x}$  is  some  strong,  norm  continuous representation  of  $H$  in a  Hilbert  space. Hence,  we  can  restrict  ourselves  to  bundles  over  an  orbit.  In the  notation  of the  previous  part, there is a  retraction $\rho_{\tau_{x}}: S^{\tau_{x}}\to S^{E_{x}}- S^{\tau_{x}^{\perp}}$.  We consider  the  isomorphism $l^{-1}(\tau_{x})\cong \zeta_{x}\oplus l^{-1}(\xi_{x})$  given  by $w_{x}^{'}\mapsto (l\circ (1-{\rm pr}_{l^{-1}(\xi_{x})}w^{'}_{x}, {\rm pr}_{l^{-1}(\xi_{x})} w_{x}^{'})$. We  claim  that  after  this  isomorphism, the  maps  ${\rm id}_{\zeta_{x}}\smash \rho_{\xi_{x}} (f\mid_{l^{-1}(\xi_{x})})$ and  $f\mid_{S^{l^{-1}(\tau_{x})}}$ are  homotopic. Consider  for  this   a  ball $D\subset E$ which  contains  the  inverse  image  $f^{-1}(D_{1}(0))$ of  the unitary  ball We  define  the  homotopy  $h: D\times  I\to S^{E}-S^{\tau^{\perp}}$ as follows

$${h(w_{x},t)=\begin{cases} l+[(1-3t){\rm id}_{E_{x}}+(3t){\rm pr}_{\xi_{x}}]\circ c &\text{$t\in [0, \frac{1}{3}]$} \\ l+{\rm pr}_{\xi_{x}}\circ c [ (2-3t){\rm id}_{l^{-1}(\xi_{x})} + (3t-1){\rm pr}_{l^{-1}(\xi_{x})} ]  &\text{$t\in  [\frac{1}{3}, \frac{2}{3}]$} \\{\rm pr}_{\zeta_{x}}\circ l + [(3-3t){\rm pr}_{\xi_{x}}+ (3t-2)\rho_{\zeta_{x}} \circ (l+c)\circ{\rm pr}_{l^{-1}(\zeta_{x})}] &\text{$t\in [\frac{2}{3}, 1]$} 
\end{cases}}$$
Since $S^{E}_{x}-D\cap \tau^{\perp}_{x}$  is  contractible, the  homotopy  above  can  be  extended to  a  homotopy $S^{l^{-1}(\tau)}\times  I\to  S^{E}-S^{\tau^{\perp}}\simeq S^{\tau}$,   as needed.   
 }
\end{enumerate}
\end{proof}

\begin{definition}[Parametrized  Schwartz  index]
 Let $G$ be  a   discrete  group  acting  on  the  proper  $G$-CW complex  $X$.  Denote  by  $(E, F, l, k)$  a  non-linear  cocycle for  the  equivariant  cohomotopy -theory  over  the  proper  $G$-space $X$,  where  $l$  is  a  linear  morphism  whose  index  bundle  is  trivial of dimension p. Let  $\xi$  be  a  finite dimensional  $G$-vector  bundle  as  constructed  in  proposition \ref{proposition nonlinearapproximation}.  The  parametrized  Schwartz  index  is  the  class of the element. 
$$ [p_{\xi}(l+k)\mid_{l^{-1}(\xi)}]: S^{l^{-1}(\xi)}\to S^{\xi}\in \pi_{G}^{{\rm ind}(l)}(X)$$ 
in  the  equivariant  cohomotopy  group  as,  introduced  by L\"uck in \cite{lueckeqstable}. This  construction  does  not  depend  of  the  choice  of  the  finite  dimensional  vector  bundle as  a consequence  of  part  3  of  proposition \ref{proposition nonlinearapproximation}.
\end{definition}

We  are  now  able  to  state  our  main  result: 

\begin{theorem}\label{theorem schwartzindex}
Let  $G$ be  a  discrete  group  acting  on a  $G$-CW  pair  $(X,A)$. Denote  by  $[l]$ the  $KO_{G}$-theoretical  class of  a  Fredholm  morphism  whose  fibrewise  index  is  a  trivial virtual  vector  bundle  of  dimension  $p$. The  parametrized  Schwartz  index gives  an  isomorphism
$$\Pi_{G}^{[l]}(X, A)\to  \pi_{G}^{p}(X,A) $$
\end{theorem}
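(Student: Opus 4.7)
The plan is to verify, in turn, (I) well-definedness of the parametrized Schwartz index as a function of equivalence classes, (II) the homomorphism property, (III) surjectivity and (IV) injectivity. Throughout, Proposition \ref{proposition nonlinearapproximation}, particularly its third part, is the central technical tool: it both produces finite-dimensional models of compact perturbations and certifies that different such choices are compatible under the stabilization relation defining L\"uck's group.

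For well-definedness, independence of the finite-dimensional subbundle $\xi$ follows from part $(3)$ of Proposition \ref{proposition nonlinearapproximation}: any two admissible choices admit a common enlargement $\tau$, and the $G$-homotopy $\rho_\tau(l+c)|_{l^{-1}(\tau)}\simeq \id_{\zeta}\wedge_{X}\rho_\xi(l+c)|_{l^{-1}(\xi)}$ is precisely the stabilization relation appearing in L\"uck's definition. Under unitary stabilization of a cocycle, the Schwartz index picks up the same unitary stabilization and defines the same class. For a homotopy of compact perturbations relative to $l$, one applies Proposition \ref{proposition nonlinearapproximation} to the family parametrized by $X\times I$ to obtain a single $\xi$ working for all $t$, giving a bundle-map homotopy of orthogonal spherical retractions. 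In the relative version, the hypothesis that $c$ is the constant map at infinity over $A$ passes directly to the spherical retraction. The homomorphism property is then immediate, because the sum in $\Pi_G^{[l]}$ is defined via the pinching map $\nabla:S^{\mathbb{R}}\to S^{\mathbb{R}}\vee S^{\mathbb{R}}$ in an $\mathbb{R}$-direction that can be chosen to be an orthogonal summand both of the $\xi$-approximation and of the ambient Hilbert bundle, so the Schwartz index commutes with the pinching sum.

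For surjectivity, given $u: S^{\xi\oplus\underline{\mathbb{R}^{k}}}\to S^{\xi\oplus\underline{\mathbb{R}^{k+p}}}$ representing a class in $\pi_G^p(X,A)$, embed $\xi\oplus\underline{\mathbb{R}^{k}}$ as a direct summand in the trivial $G$-Hilbert bundle $X\times\mathcal{H}$ via the proper stabilization theorem (Proposition \ref{proposition proper stabilization}), and take $l$ to be the projection onto the orthogonal complement, stabilized by a unitary piece so that its index bundle is trivial of dimension $p$. Define $c$ to be $u$ (extended radially) on the finite-dimensional summand and $0$ on the orthogonal complement. Fibrewise compactness is automatic because the image of $c$ sits in a finite-dimensional subbundle; the preimage of bounded sets is bounded because $u$ is a proper map of spheres. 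By construction the parametrized Schwartz index of $(X\times \mathcal{H},X\times \mathcal{H},l,c)$ is $[u]$.

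The hard part will be injectivity. Suppose $(E_0,F_0,l_0,c_0)$ and $(E_1,F_1,l_1,c_1)$ have the same parametrized Schwartz index. After stabilization using Propositions \ref{proposition proper stabilization}, \ref{proposition linear extension} and \ref{proposition Hilberthomotopy} one may assume a common $G$-Hilbert bundle $E$, a common linear Fredholm morphism $l:E\to F$, and a common finite-dimensional subbundle $\xi\subset E$. L\"uck's equivalence then yields, possibly after a further enlargement $\xi\subset \tau$, a $G$-homotopy $h_t:S^{l^{-1}(\tau)}\to S^{\tau}$ of spherical bundle maps, trivial over $A$, joining the two Schwartz indices. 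The task is to lift $h_t$ to a fibrewise compact homotopy $l+H_t:S^E\to S^F$ connecting $l+c_0$ to $l+c_1$ such that $H_t$ is a proper compact perturbation at every $t$ and is the constant map at infinity over $A$. This is obtained by writing $E=l^{-1}(\tau)\oplus l^{-1}(\tau^\perp)$, defining $H_t$ as $h_t$ on the first summand and $c_i$-controlled interpolation on the second, and then applying a cutoff-and-gluing argument controlled by the uniform boundedness estimates of Proposition \ref{proposition nonlinearapproximation}. The compactness and properness of $H_t$ survive because $h_t$ takes values in a finite-dimensional subbundle and the interpolation is constant outside a fixed bounded region; verifying these estimates uniformly in $t$, equivariantly, and relatively to $A$ is the main technical point.
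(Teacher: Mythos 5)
Your proposal follows essentially the same route as the paper: the forward direction is the Schwartz index built from Proposition \ref{proposition nonlinearapproximation}, and surjectivity is obtained by realizing a L\"uck-type cocycle $u$ as a compact perturbation of a stabilized projection on the trivial bundle $X\times\mathcal{H}$ via Proposition \ref{proposition proper stabilization}, which is precisely the paper's explicit inverse construction. You are in fact more thorough than the paper, whose proof consists only of that inverse construction and leaves well-definedness, additivity and injectivity implicit; your lifting of the finite-dimensional homotopy back to a compact perturbation is exactly the content the paper omits.
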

\begin{proof}
We  construct  a  natural  inverse  map.  Using  the  stability  conditions, we can add finite  dimensional  vector  bundles and   assume  that  the  morphism  $u: S^{\xi\oplus \mathbb{R}^{n+p}}\to S^{\xi \oplus \mathbb{R}^{n}}$    is homotopic  to  a  map  such  for  which on  every  fiber,   the  only   preimage  of  the  basis  point  $\infty$  is  $\infty$. We  denote  by $c:\Gamma(\xi\oplus \mathbb{R}^{n+p}) \to \Gamma (\xi\oplus  \mathbb{R}^{n})$ the  (possibly  nonlinear) map  obtained  by  restricting $u$  fibrewise to the  complement  of  the  point at  infinity.     

 Consider  the   projection  operator  $P_{\xi\oplus \mathbb{R}^{n}}: \Gamma(\xi\oplus \mathbb{R}^{n+p}) \to \Gamma (\xi\oplus  \mathbb{R}^{n})$.  Let  $\mathcal{H}$ be  the stable   $G$-hilbert  space   of  proposition \ref{proposition proper stabilization}. Recall  that  $\Gamma(\eta)\oplus\Gamma(X \times \mathcal{H})\cong \Gamma(X\times  \mathcal{H})$ for all  $G$-Hilbert  bundles $\eta$, due  to  the  proper  stabilization theorem.   The  map  of  $C(X)$- $G$- Hilbert modules  
$$P_{\xi\oplus\mathbb{R}^{n}}\otimes_{C(X)}{\rm id}:   \Gamma(\xi\oplus \mathbb{R}^{n+p})\otimes \Gamma(X\times  \mathcal{H}) \to \Gamma(\xi\oplus \mathbb{R}^{n})\otimes \Gamma(\mathcal{H}\times  X)\cong \Gamma(X\times  \mathcal{H})$$
  determines   up  to  precomposition  with   the  proper stability isomorphism  a  Fredholm morphism $l:X\times \mathcal{H}\to X\times  \mathcal{H}$, which  represents  a  class   for  which   the  index  bundle  is  trivial  of  virtual  dimension $p$.
We  denote  by  $C$ the  proper, compact, nonlinear   map  defined  on  every  fiber  by 
$$\Gamma(X\times \mathcal{H})\overset{\cong}{\to}           \Gamma(X\times \mathcal{H}) \oplus  \Gamma(\xi\oplus \mathbb{R}^{n+p}) \overset{0\oplus  c}{\to}             \Gamma(X\times \mathcal{H})\oplus  \Gamma(\xi\oplus \mathbb{R}^{n})  \to     \Gamma(X\times \mathcal{H})$$
where  the  first  map  is   given  by  the  proper  stabilization isomorphism,  and  the  second  one  is  the inclusion  in the first  factor  of  the  direct  sum  followed  by the  proper  stabilization  isomorphism. 
The cocycle  associated  to $u$  is   $(X\times \mathcal{H}, X\times \mathcal{H}, l, C)$. 
. 
\end{proof}

\begin{remark}
In the case  of  compact Lie groups,  the  stability  condition \ref{condition triviality} allows  to  suppose  that  the index bundle $\ker - {\rm coker}$ has  the  form  of  a  trivial  bundle  $X \times  V$,  because  the  classifying  space has  the   equivariant  homotopy  type  of  a  point. The equivariant  Schwartz  index  identifies this  with  the  usual  definition  for  equivariant  cohomotopy  groups for  finite  $G$-complexes,  in  such a  way  that our  proof  for  discrete  groups  is  formally  the  same, with  the  classical definition above instead  of that  of  L\"uck. By specifying to  the  trivial group and  the  one-point space, this  theorem can be  traced to the main  result  in \cite{schwartz}, and in  \cite{bauer} one  finds  an $S^{1}$-equivariant  version  which  basically deals with all  problems  on  compact  Lie groups. In view  of  this correspondence, we use  in the  rest of  this  note  the  notation $\Pi$  for cohomotopy or  its  equivariant  generalizations.  
\end{remark}

\section{A Burnside  Ring  for  Lie  Groups}\label{sectionburnside}
We now  define a Burnside  ring  in  operator theoretical  terms for  non compact  Lie  groups. We  first  recall  the  definition for  compact  Lie  groups, which   was  first  indtroduced  by Tom Dieck  in \cite{tomdieckburnside}. 

\begin{definition}
Let $G$  be  a  compact  Lie  group. Consider the folowing equivalence
relation on the  collection of  finite $G$-$CW$ complexes. $X\sim $Y if  and
only if  for  all  $H\subset G$, the  spaces $X^{H}$ and $X^{H}$ have
the  same  Euler  characteristic. Let  $A(G)$ be  the set of
equivalence  classes. Disjoint  union and  cartesian product  of
complexes are  compatible  with this  equivalence  relation and
induce  composition laws on $A(G)$. It  is  easy  to  verify  that
$A(G)$ together  with  these composition laws  is  a commutative  ring
with identity. The zero  element  is  represented  by a  complex $X$
such that  the  Euler  characteristic $\chi(X^{H})$ is  zero  for  each
$H\subset G$. If $K$ is  a  space  with  trivial $G$-action and
$\chi(K)=-1$, then $X\times  K$ represents  the  additive inverse of
$X$ in $A(G)$. 

\end{definition}
 We  collect  some information about  the  algebraic  structure of  the  Burnside  ring  in the  followiing  results, which  have been  published  by  Tom Dieck  in \cite{tomdieck}, pages 240 250 and  256, respectively.

 \begin{proposition}\label{technical proposition tomdieck}
\begin{enumerate}
\item{As  abelian  group $A(G)$ is  the  free  abelian group on $G/H$, where
$H \in \Phi(G)$ and  $\Phi(G)$ denotes  the  space of conjugacy classes
of  subgroups such that $N(H,G)/H$ is  finite, where  $N(H,G)$ denotes
the normalizer of  $H$  in $G$.}
\item{ There  is  a character  map ${\rm char}_{G}: A(G)\to
{\rm Map}(\Phi(G), \mathbb{Z})$, where   $\Phi(G)$, the  space  of
closed  subgroups of $G$ carries  the Hausdorff  metric (in particular
it is a  compact   Hausdorff space). And  ${\rm char }_{G}(X) $ is  defined
 by   $ H \mapsto X^{H}$.}
\end{enumerate}  
\end{proposition}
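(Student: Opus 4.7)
The plan is to follow tom Dieck's classical arguments as cited. For part (i), the strategy splits into generation and linear independence of the orbit classes $[G/H]$ for $H\in\Phi(G)$. For generation, any finite $G$-CW complex admits a cell filtration by orbits $G/H_i\times D^{n_i}$, and inclusion--exclusion on Euler characteristics expresses $[X]$ as a $\mathbb{Z}$-linear combination of orbit classes $[G/H_i]$. The key reduction step is then to show that $[G/H]=0$ in $A(G)$ whenever $N(H,G)/H$ is positive-dimensional: for any closed subgroup $K\le G$, the fixed-point space $(G/H)^K$ is either empty or carries a free action of $N(H,G)/H$, and since this Weyl group contains a circle when positive-dimensional, the Euler characteristic $\chi((G/H)^K)$ vanishes by the standard fact that free $S^1$-actions kill Euler characteristics. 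This leaves only classes indexed by $\Phi(G)$.

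For linear independence, one orders a set of representatives of $\Phi(G)$ by a linear extension of the subconjugacy order and observes that the matrix $M_{H,K}=\chi((G/K)^H)$ is triangular with non-zero diagonal entries $|N(H,G)/H|$. This forces $\mathbb{Z}$-linear independence of the $[G/H]$. For part (ii), well-definedness of $\mathrm{char}_G$ is immediate from the very definition of the equivalence relation used to form $A(G)$. The remaining content is that $H\mapsto\chi(X^H)$ is actually continuous (equivalently locally constant) on $\Phi(G)$ for the Hausdorff metric. Using the slice theorem one produces, for each $H_0\in\Phi(G)$, a neighborhood $U$ of $H_0$ such that for $H\in U$ the fixed-point set $X^H$ is related to $X^{H_0}$ by a product decomposition compatible with the $G$-CW structure, forcing $\chi(X^H)=\chi(X^{H_0})$.

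The main technical obstacle lies in the continuity statement in (ii): it requires a careful slice-theoretic analysis of how fixed-point sets vary under Hausdorff-small perturbations of the subgroup, which is the reason tom Dieck's original arguments in \cite{tomdieck} occupy several pages. A secondary, but routine, obstacle is assembling the triangularity of the character matrix in (i) with the right ordering of $\Phi(G)$; here one must ensure that the diagonal entry $\chi((G/H)^H)=|N(H,G)/H|$ is finite precisely on $\Phi(G)$, which is exactly the defining property of this indexing set and is what makes the basis statement work.
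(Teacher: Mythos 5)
The paper offers no proof of this proposition beyond the citation to tom Dieck (pages 240--256), and your outline is a faithful reconstruction of the arguments given there: additivity of $\chi$ over equivariant cells, vanishing of $[G/H]$ for positive-dimensional Weyl groups via free circle actions on $(G/H)^K$, triangularity of the character matrix with diagonal entries $\chi((G/H)^{H})=|N(H,G)/H|$, and continuity of $H\mapsto\chi(X^{H})$ in the Hausdorff topology. So your approach is essentially the same as the paper's (cited) one; the only point where you are vaguer than the source is the continuity step in (ii), whose key input is not a slice-theorem product decomposition but the fact that closed subgroups Hausdorff-close to $H_{0}$ are subconjugate to $H_{0}$ by elements near the identity, after which one compares $\chi((G/K)^{H})$ with $\chi((G/K)^{H_{0}})$ orbit by orbit.
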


\begin{proposition}
By  means  of  the  character  map, the  elements of  the burnside  ring  can  be identified  with  sums 

$$\underset{K}{\sum} n(H,K) \chi(X^{K}) \cong 0 \; {\rm mod} \; \mid
NH/H \mid   \quad (*)$$

where the sum is  over  conjugacy  classes  $(K)$ such that $H$ is
normal  in $K$,  $K/H \subset N_{H,G}/H$ is  cyclic, the integer  numbers
$n(H,K)$ are  defined  to  be 
$$n(h,K)=\mid {\rm Gen} (K/H) \mid \; \mid W_{H,G} /N_{W_{K/H},
  W_{H,G}}\mid$$ 
 and  ${\rm Gen}(Z)$ denotes  the  cardinality  of the  generators of
  the finite cyclic group $Z$. In particular, the  rationalized  Burnside  ring $A(G)\otimes \mathbb{Q}$ can be  identified  with  the  ring  of continuous rational  functions  defined  on $\Phi(G)$
\end{proposition}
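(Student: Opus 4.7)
The plan is to work via the character map $\mathrm{char}_G \colon A(G) \to \mathrm{Map}(\Phi(G), \mathbb{Z})$ of the preceding proposition and to explicitly identify its image. I would first establish injectivity. Using the basis $\{[G/H] : H \in \Phi(G)\}$ for $A(G)$, the key computation is that $(G/H)^K$ is nonempty only when $K$ is subconjugate to $H$, and when $K$ is conjugate to $H$ one has $\chi((G/H)^H) = |N(H,G)/H| = |W_{H,G}|$. Ordering $\Phi(G)$ so that $K \leq H$ whenever $K$ is subconjugate to $H$, the matrix $[\chi((G/H)^K)]_{H,K}$ becomes upper-triangular with nonzero (finite) diagonal entries, which forces injectivity of $\mathrm{char}_G$.

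Next I would prove the necessary direction of $(*)$. For a given $H$ and an arbitrary $X = \coprod G/K_i$, one decomposes $X^H$ into $W_{H,G}$-orbits. The nontrivial orbits correspond to subgroups $K \supset H$ in $\Phi(G)$ with $K/H$ topologically cyclic (contributions from non-cyclic $K/H$ cancel in the Euler characteristic, by a standard argument on tori of rank $\geq 1$). A careful count shows that the left-hand side of $(*)$ measures the cardinality of a free $W_{H,G}$-set, and is therefore divisible by $|W_{H,G}| = |N(H,G)/H|$; the weights $n(H,K)$ encode precisely how many generators of $K/H$ give rise to the same orbit of $W_{H,G}$ on the set of generators, which accounts for the factor $|\mathrm{Gen}(K/H)|\cdot|W_{H,G}/N_{W_{K/H},W_{H,G}}|$.

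The delicate direction is sufficiency: every $\varphi \in \mathrm{Map}(\Phi(G), \mathbb{Z})$ satisfying $(*)$ for all $H$ lies in the image. My plan is to argue by induction on chains in $\Phi(G)$, ordered by reverse subconjugacy. At each step, I want to realize a function supported on conjugacy classes $\geq H$ with prescribed value at $H$; the basic building blocks are $[G/H]$, whose character at $H$ equals $|W_{H,G}|$. The subtle point is that the value of $\varphi(H)$ need only be congruent to a correction term modulo $|W_{H,G}|$, and this correction must coincide exactly with $-\sum_{K > H} n(H,K)\varphi(K)$ modulo $|W_{H,G}|$. I expect the main obstacle to lie here: showing that the linear relations imposed by the induction step are precisely the congruences $(*)$ and no others. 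This requires a careful double-coset analysis of $(G/K)^H$ for $K > H$, identifying its orbit decomposition with the index $|W_{H,G}/N_{W_{K/H},W_{H,G}}|$ times the generator count.

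For the final statement on $A(G) \otimes \mathbb{Q}$, the rational statement is then automatic: tensoring with $\mathbb{Q}$ inverts $|W_{H,G}|$ at every $H$, so the congruences $(*)$ impose no constraint and $\mathrm{char}_G \otimes \mathbb{Q}$ is surjective onto $\mathrm{Map}(\Phi(G), \mathbb{Q})$. Continuity on $(\Phi(G), d_{\mathrm{Haus}})$ follows from the local constancy of $K \mapsto \chi(X^K)$ on finite $G$-CW complexes, which itself is a consequence of the slice theorem and the fact that small perturbations of a closed subgroup yield subgroups subconjugate to it, leaving the Euler characteristic of the fixed-point set unchanged.
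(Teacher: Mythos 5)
The paper offers no proof of this proposition to compare against: it is quoted from Tom Dieck's book (the surrounding text says these results ``have been published by Tom Dieck in \cite{tomdieck}, pages 240, 250 and 256''). So your proposal has to be judged on its own. The overall architecture --- injectivity of ${\rm char}_G$ from the triangularity of the matrix $\bigl[\chi((G/H)^K)\bigr]$ with diagonal entries $|W_{H,G}|$, then necessity of the congruences, then sufficiency, then the rational statement --- is the right one, and the injectivity and rationalization steps are essentially correct.

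There are two genuine gaps. First, the necessity argument is misstated. The left-hand side of $(*)$ is not ``the cardinality of a free $W_{H,G}$-set''; the standard derivation applies the orbit-counting identity $\sum_{w\in W}\chi(Y^{w})=|W|\,\chi(Y/W)$ to the finite $W$-CW complex $Y=X^{H}$ with $W=N(H,G)/H$ (finite because $H\in\Phi(G)$), and then groups the elements $w$ according to the conjugacy class of the cyclic subgroup $\langle w\rangle=K/H$; the number of $w$ generating a subgroup conjugate to $K/H$ is exactly $n(H,K)=|{\rm Gen}(K/H)|\cdot|W_{H,G}/N_{W_{K/H},W_{H,G}}|$, and the divisibility by $|W_{H,G}|$ comes from the factor $|W|$ on the right-hand side, not from freeness. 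Relatedly, your appeal to ``topologically cyclic'' quotients and cancellation ``on tori of rank $\geq 1$'' is out of place: since $K/H\subset N(H,G)/H$ and the latter is finite, only finite cyclic quotients occur and no such cancellation is needed. Second, the sufficiency direction --- that every continuous integer-valued function on $\Phi(G)$ satisfying all the congruences lies in the image of ${\rm char}_G$ --- is precisely where you say you ``expect the main obstacle to lie''; as written, your induction step is a description of what must be proved rather than a proof. This is the substantive half of Tom Dieck's theorem and requires either an explicit inductive realization over the (finitely many) orbit types involved, with a verification that the only obstruction at each stage is the congruence $(*)$ for that $H$, or an index/counting argument identifying the subgroup cut out by the congruences with the image of the character map. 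Until that step is supplied the proposition is not established.
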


\begin{theorem}
Let  $G$  be  a  compact  Lie  group. There  is  an isomorphism
$$\Pi_{G}^{0}(\{*\})\to  A(G)$$
\end{theorem}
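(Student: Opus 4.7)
The plan is to realize the map as the composition of two natural identifications: the parametrized Schwartz index of Theorem \ref{theorem schwartzindex} (in its compact Lie group incarnation discussed in the remark following that theorem) to pass from the operator-theoretic $\Pi_{G}^{0}(\{*\})$ to the classical equivariant stable cohomotopy $\pi_{G}^{0}(\{*\})$, and then Tom Dieck's classical theorem \cite{tomdieck} identifying $\pi_{G}^{0}(\{*\})$ with $A(G)$ via the family of fixed-point degrees indexed by $\Phi(G)$.

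First I would reduce the degree labelling. Since $G$ is compact, the classifying space for proper actions is a point, and so $KO_{G}^{0}(\{*\})\cong RO(G)$ (real representation ring), and any Fredholm morphism $l$ with $[l]=0$ has virtual index equal to a trivial $0$-dimensional virtual representation. By the stabilisation condition of Proposition \ref{proposition proper stabilization} applied to the trivial bundle over a point, one can normalise each representative to a cocycle $(\mathcal{H},\mathcal{H},l,c)$ in which $\mathcal{H}$ is a fixed complete real $G$-universe and $l$ has trivial virtual index.

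Next I would implement the Schwartz index. The approximation Proposition \ref{proposition nonlinearapproximation} and its consequences in the proof of Theorem \ref{theorem schwartzindex} were stated for discrete groups, but, as observed in the remark that closes Section \ref{sectionindex}, the same arguments go through verbatim for compact Lie groups: the proof uses only (i) contractibility of the space of $H$-equivariant unitaries of a strongly continuous Hilbert $H$-representation for compact $H$ (which is exactly what is invoked in Proposition \ref{proposition Hilberthomotopy}), (ii) existence of invariant metrics (available from \cite{palais}), and (iii) replacement of finite $G$-CW cells by orbits $G/H$ with $H$ compact. Applying these tools to $X=\{*\}$ produces a finite-dimensional real $G$-subspace $V\subset\mathcal{H}$ whose orthogonal projection converts the nonlinear cocycle $l+c$ into a pointed $G$-map $\rho_{V}(l+c)|_{l^{-1}(V)}:S^{V}\to S^{V}$, well-defined up to the equivalence relation in L\"uck's definition; independence of $V$ follows from Proposition \ref{proposition nonlinearapproximation}(iii). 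An inverse map is produced as in the proof of Theorem \ref{theorem schwartzindex}, sending a representative $u:S^{V}\to S^{V}$ to the cocycle $(\mathcal{H},\mathcal{H},P_{V},u-P_{V})$ after stabilising by $\mathcal{H}$ and using the proper stabilisation isomorphism. The operations and the unit match on representatives, so this yields a natural ring isomorphism $\Pi_{G}^{0}(\{*\})\cong \pi_{G}^{0}(\{*\})$ in the classical $RO(G)$-graded sense of \cite{mayeqhomcohom}.

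Finally, I would quote Tom Dieck's classical character isomorphism: given a representative $u:S^{V}\to S^{V}$, the family of Brouwer degrees $H\mapsto \deg\bigl(u^{H}:S^{V^{H}}\to S^{V^{H}}\bigr)$ defines an element of $\mathrm{Map}(\Phi(G),\mathbb{Z})$ satisfying the congruences $(*)$ of the proposition preceding the statement; the resulting homomorphism $\pi_{G}^{0}(\{*\})\to A(G)$ is the isomorphism proved in \cite{tomdieck}. Composition of the two steps yields the required isomorphism $\Pi_{G}^{0}(\{*\})\to A(G)$; surjectivity at the level of $\Pi_{G}^{0}(\{*\})$ can alternatively be witnessed directly by the $G$-ENR example recalled in Section \ref{sectionexamples}, since the Lefschetz--Dold index $I_{G}(X):S^{V}\to S^{V}$ of an equivariant ENR $X$ is a cocycle in $\Pi_{G}^{0}(\{*\})$ whose image under the character map equals $H\mapsto \chi(X^{H})$. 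The main obstacle in this plan is purely notational, namely verifying that the discrete-group finite-dimensional approximation of Proposition \ref{proposition nonlinearapproximation} transfers without change to the compact Lie setting; this rests on the fact that for compact $H$, the space of $H$-equivariant unitaries is still contractible and one can still produce $H$-invariant sections $\phi_{i}:X\to E$ whose span together with the image of $l$ fills the fibre, which is where the noncompact case would fail and our Hilbert-bundle framework becomes essential.
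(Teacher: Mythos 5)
Your proposal is correct and takes essentially the route the paper intends: the paper in fact states this theorem without an explicit proof, relying implicitly on the remark following Theorem \ref{theorem schwartzindex} (the compact Lie incarnation of the parametrized Schwartz index, reducing $\Pi_{G}^{0}(\{*\})$ to classical $RO(G)$-graded $\pi_{G}^{0}(\{*\})$) composed with tom Dieck's classical identification $\pi_{G}^{0}(\{*\})\cong A(G)$ via fixed-point degrees. You have simply made both steps explicit, including the point the paper only asserts, namely that the finite-dimensional approximation of Proposition \ref{proposition nonlinearapproximation} transfers from discrete groups to compact Lie groups.
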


\subsection{Computational  Remarks}
 We  introduce  some   basic  tools of  algebraic  topology  in order  to perform  computations. 

\begin{remark}[The  spectral sequence]
Let  $X$ be  a  proper  $G$-CW  complex.  There  is  an
equivariant Atiyah-Hirzebruch  spectral sequence  which 
converges  to $\Pi^{n}_{G} (X)$ and  whose $E^{2}$-term is  given
in terms  of Bredon cohomology

$$E_{2}^{p,q}= H^{p} _{\mathbb{Z}{\rm Or}G }(X,  \Pi_{?}^{q}) $$
AppLied  to the  universal  proper $G$-space $\EfinG$: 

$$E_{2}^{p,q}= H^{p} _{\mathbb{Z} \mathcal{SUB}_{\mathcal{COM}(G)}}
(\EfinG,  \Pi_{?}^{q}) $$

where $\Pi_{?}^{0} $  is  the  contravariant coefficient  system
  $H\mapsto \pi_{H}^{0}$. 

There  is  a  canonical identification 
$$H^{0} _{\mathbb{Z} \mathcal{SUB}_{\mathcal{COM}(G)}} (\EfinG,
  \Pi_{?}^{0})\cong {\varprojlim}_{H\in \mathcal{COM}} \Pi_{H}^{0}$$ 

The  edge homomorphism  of  the  spectral sequence defines  a map 
$${\rm edge}^{G}: \Pi_{G}^{0}(\EfinG)\to A_{inv}(G)$$

\end{remark}

\begin{definition}[An operator theoretical Burnside  Ring]
Let  $G$  be  a  locally  compact  group. The operator theoretical  burnside  ring of  $G$, $A^{\rm op}(G)$  is  the  $0$-dimensional  equivariant cohomotopy theory of the  classifying space  of  proper  actions $\EfinG$. In symbols
$$A^{\rm op} (G)=\Pi_{G}^{0}(\EfinG)$$

The  augmentation  ideal $\ideal_{G}\subset\Pi_{G}^{0}(\EfinG)$ is  defined  to  be the  kernel  of  the  composition of  the  restriction  to  the  oth- skeleton of  the  classifying  space  and  the restriction  to  the  trivial  group 

$$\Pi_{G}^{0}(\EfinG)\to \Pi_{G}^{0}(\EfinG_{0})\to \Pi_{\{e\}}^{0}(\EfinG_{0})$$
 
\end{definition}

 \begin{example}[The  group $Sl_{2}(\mathbb{R})$]
Recall that the  group $Sl_{2}(\mathbb{R})$  is  defined to  be  the
group  of real $2\times 2$-matrices  with  determinant 1. It  is  a
Lie  group of  dimension 3 and  has one  connected  component. The
maximal compact subgroup is  $S^{1}=SO_{2}$. 

As  $Sl_{2}(\mathbb{R})$  is  almost  connected,  a  model for  $\E
_{\mathcal{COM}}Sl_{2}$ is $Sl_{2}(\mathbb{R})/SO_{2}\approx
\mathbb{R}^{2}$, which can be handled as the  upper-half plane  model
for the $2$-dimensional hyperbolic  space. Note  that  this  is a 
zero-dimensional proper  $CW$-complex. From the  equivariant  Atiyah-
Hirzebruch  Spectral Sequence follows  that  the edge  homomorphism
$${\rm edge }^{Sl_{2}(\mathbb{R})}:
\Pi_{Sl_{2}(\mathbb{R})}^{0}(\EfinG Sl_{2}(\mathbb{R}))
\to \lim_{ {\rm inv}H \in \Komp  (Sl_{2}(\mathbb{R})) }\Pi_{H}(pt)$$
is  an isomorphism. On the  other  hand, since $S^{1}$ is  a final
object  in the  category  of  compact  subgroups of
$Sl_{2}(\mathbb{R})$, we  have
$$A^{\rm  op} (Sl_{2}(\mathbb{R}))\cong A(S^{1})$$ 

$$A (S^{1}) \cong  \mathbb{Z}$$
 is  a  well known fact. 
\end{example}
\subsection{Extending  one version  of the Segal conjecture for  lie  groups}
The  segal  conjecture for  finite groups, proven in  1984  \cite{carlssonsegal} states  the existence  of  an isomorphism  between  a  certan  completion  of  the  Burnside  ring   and  the  $0$th- stable  cohomotopy  of  the  classifying  space:

\begin{theorem}[Carlsson, 1984]
The  Segal  conjecture  is  true  for  finite  groups, that  is, there
is  an  isomorphism
$$A(G)_{\hat{I}_{G}} \cong \Pi^{0}_{1}({\rm B }G)$$
\end{theorem}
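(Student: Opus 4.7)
The strategy I would follow is Carlsson's original one: reduce to $p$-groups and induct on the order of $G$, with the elementary abelian case as the computational core.

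Step one is to construct the comparison map. By tom Dieck's identification $A(G) \cong \Pi_G^0(\{*\})$, pulling back along $EG \to \{*\}$ and invoking the Adams-type isomorphism $\Pi_G^0(EG_+) \cong \Pi_{\{1\}}^0(BG_+)$ yields a natural homomorphism $A(G) \to \Pi_{\{1\}}^0(BG_+)$. Because $EG$ is $G$-free, the target is automatically $\hat{I}_G$-adically complete, so the map factors through $A(G)_{\hat{I}_G}$, and the goal is to show that the factored map is an isomorphism.

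Step two is the reduction to $p$-groups. Both sides of the comparison map are compatible with the Burnside-ring transfer and restriction maps. Using the Nishida--Martino--Priddy $p$-local stable splittings of $BG$ in terms of classifying spaces of $p$-subgroups, together with the parallel decomposition of $A(G)_{\hat{I}_G}$ after $p$-completion, the isomorphism claim reduces to the case where $G$ is a $p$-group.

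Step three inducts on $|G|$ for $p$-groups. The case $|G|=1$ gives $A(\{1\})_{\hat{I}} \cong \mathbb{Z} \cong \pi_s^0(S^0)$, which is immediate. For $|G|>1$, pick a central subgroup $Z \cong \mathbb{Z}/p$; the fibration $BZ \to BG \to B(G/Z)$ together with the compatibility of Burnside-ring restrictions along $G \twoheadrightarrow G/Z$ and an Eilenberg--Moore-type spectral-sequence argument reduces the claim for $G$ to the combination of the inductive hypothesis for $G/Z$ and the base case for $Z$.

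The main obstacle is therefore the case $G = \mathbb{Z}/p$ itself, for which the Adams spectral sequence computing the stable cohomotopy of $B(\mathbb{Z}/p)_+$ must be identified with $A(\mathbb{Z}/p)_{\hat{I}}$. The $E_2$-term is controlled by the Singer construction $R_+ H^*(B(\mathbb{Z}/p); \mathbb{F}_p)$ as a module over the Steenrod algebra, and Lin's theorem for $p=2$ together with Gunawardena's theorem for odd $p$ supply the crucial $\operatorname{Ext}$-computation together with the collapse of the spectral sequence. This elementary-abelian input is the deepest ingredient; once it is granted, the reduction, completion, and inductive steps are essentially formal.
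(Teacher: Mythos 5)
The first thing to say is that the paper does not prove this statement at all: it is quoted as background, with a citation to Carlsson, before the author passes to the Lie-group variants that are the actual subject of the section. So there is no ``paper proof'' to match your plan against; what you have written is a roadmap of the literature proof, and it has to be judged on its own terms. As such it gets the overall architecture right (comparison map from $A(G)\to \Pi^0_1(BG_+)$ factoring through the $\hat{I}_G$-completion, reduction to $p$-groups, induction on the order, elementary abelian input via the Singer construction and the Lin/Gunawardena theorems), but two of the steps, as stated, would not go through.

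First, your induction step for $p$-groups is not the one that works. Choosing a central $Z\cong\mathbb{Z}/p$ and running an Eilenberg--Moore-type argument on $BZ\to BG\to B(G/Z)$ does not reduce the conjecture for $G$ to the cases of $G/Z$ and $\mathbb{Z}/p$: stable cohomotopy has no K\"unneth or Eilenberg--Moore control over such fibrations, and this is precisely why the conjecture resisted proof for so long. Carlsson's actual induction uses the free/singular dichotomy for unit spheres of representations and requires as input the Segal conjecture \emph{in all degrees} for elementary abelian groups of \emph{arbitrary} rank; that case is the Adams--Gunawardena--Miller theorem, a substantial computation beyond the rank-one results of Lin and Gunawardena that you cite. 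Declaring $G=\mathbb{Z}/p$ to be ``the main obstacle'' therefore understates the base case in an essential way. Second, the assertion that $\Pi^0_1(BG_+)$ is ``automatically'' $\hat{I}_G$-adically complete because $EG$ is free is not automatic: it is a lemma, proved by running the skeletal filtration of $BG$ and using that the subquotients are finite and induced, and it is part of what must be checked before the comparison map even factors through the completion. (Your reduction to $p$-groups via transfers is fine in spirit, though historically it is the May--McClure induction-theoretic argument rather than the later Martino--Priddy splittings.) None of this affects the paper, which only uses the statement as a known input to Theorem \ref{theorem segalconjecturenoncompact}.
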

In the  case  of  Lie  groups, this  conjecture is  known  to  be  false  in this  statement  \cite{feshbachsegal}. However,  a weaker version  was obtained  by Feshbach  in \cite{feshbachsegal}  and  later  refined  by Bauer  in  \cite{bauersegal}. The  statement  is :

\begin{theorem}\label{theorem segalconjecturecompact} [Segal
    conjecture for compact  Lie  groups]

Let  $G$  be  a  compact  Lie group with  maximal  torus  $T$ of
dimension $n$ and  Weyl group $W=N_{T,G}/T$. Let $\rho: W \to
Gl_{n}(\mathbb{Z})$  be a  representation which gives rise  to the
action of  $W$  on $T \approx \mathbb{R}^{n}/\mathbb{Z}^{n}$. Suppose
that $\rho$ does  not  originate at  a generalized  quaternion group
of  order $2^{n}$. Then the  map 
$$A(G)_{\ideal_{G}}\to \Pi_{1}^{0}(\B G)  $$ 
has  dense image  in the  skeletal filtration.    
\end{theorem}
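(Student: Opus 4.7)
The plan is to reduce the statement to two classical inputs: Carlsson's theorem for the finite Weyl group $W$, and a completion-type statement for the stable cohomotopy of $\B T$. Since $G$ is compact Lie with maximal torus $T$ and finite Weyl group $W=N_{T,G}/T$, the fibration $T\to G\to G/T$ yields a homotopy equivalence $\B G\simeq (\B T)_{hW}$. My first step would be to exploit this to rewrite $\Pi_{1}^{0}(\B G)$ via a Lyndon--Hochschild--Serre-style spectral sequence, factoring the comparison map of the theorem through a $W$-fixed-point problem on the stable cohomotopy of $\B T$.

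Second, I would invoke the abelian case: the $I_{T}$-adic completion of $A(T)$ maps with dense image to $\Pi_{1}^{0}(\B T)$ in the skeletal filtration, a consequence of the Atiyah--Segal completion theorem combined with the fact that, for tori, the Burnside ring sits naturally inside the completed representation ring and the Euler characteristic computations reduce to characters on $T\cong \IR^{n}/\IZ^{n}$. Combining this with Carlsson's theorem \cite{carlssonsegal} for the finite group $W$, whose action on both $\B T$ and $A(T)$ is governed by the representation $\rho$, produces a candidate dense-image statement after passage to $W$-homotopy fixed points.

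Third, the hypothesis that $\rho$ does not originate at a generalized quaternion group of order $2^{n}$ enters at the algebraic stage, controlling the identification of $A(G)_{\ideal_{G}}$ with the appropriate $W$-invariants of $A(T)_{\ideal_{T}}$ and forcing the vanishing of the relevant Tate cohomology $\hat{H}^{*}(W; A(T)_{\ideal_{T}})$. Feshbach's analysis \cite{feshbachsegal} isolated this $2$-primary quaternion configuration as the precise obstruction to a literal Segal conjecture for compact Lie groups; Bauer's refinement \cite{bauersegal} shows that outside this locus the comparison map is dense in the skeletal filtration.

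The main obstacle, and where I expect the technical bulk of the argument to lie, is the mismatch between the skeletal filtration of $\B G$ and the filtration induced on $(\B T)_{hW}$: these do not align on the nose, so the passage from density over $\B T$ to density over $\B G$ requires an inverse-limit comparison with \emph{Mittag--Leffler} control over the derived $W$-fixed points of the pro-system $\{\Pi_{1}^{0}((\B T)^{(k)})\}_{k}$. The quaternion hypothesis on $\rho$ is precisely what makes the offending derived terms vanish, so that the inverse limit commutes with the density statement and the desired conclusion for $A(G)_{\ideal_{G}}\to \Pi_{1}^{0}(\B G)$ follows.
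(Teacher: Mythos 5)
The paper does not prove this statement at all: it is quoted as a known theorem of Feshbach \cite{feshbachsegal}, as refined by Bauer \cite{bauersegal}, and is then used as a black box in the proof of Theorem \ref{theorem segalconjecturenoncompact}. So there is no in-paper argument to compare yours against; what can be assessed is whether your sketch would reconstruct the Feshbach--Bauer proof, and as written it would not.

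Two steps fail concretely. First, the fibration $T\to G\to G/T$ does not give $\B G\simeq (\B T)_{hW}$. What is true is $(\B T)_{hW}\simeq \B N_{G}(T)$, and $\B N_{G}(T)\to \B G$ is not an equivalence (already for $G=SU(2)$). The correct replacement --- and in fact the engine of Feshbach's argument --- is that $\chi(G/N_{G}(T))=1$, so the stable transfer exhibits $\Sigma^{\infty}\B G_{+}$ as a retract of $\Sigma^{\infty}\B N_{G}(T)_{+}$; one then works with the extension $1\to T\to N_{G}(T)\to W\to 1$ and reduces to finite subgroups, where Carlsson's theorem applies. Second, the ``abelian case'' you invoke is not a consequence of the Atiyah--Segal completion theorem, which computes $K^{0}(\B G)$ from $R(G)^{\wedge}_{I}$ and says nothing about stable cohomotopy; note also that $A(T)\cong\mathbb{Z}$ for a torus (by Proposition \ref{technical proposition tomdieck} only subgroups with finite Weyl group contribute), so the density question for $\B T$ is precisely where the literal Segal conjecture fails for positive-dimensional groups and cannot be fed in as an input of the same shape as Carlsson's theorem. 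Finally, the role of the generalized-quaternion hypothesis is asserted (vanishing of a Tate cohomology group, Mittag--Leffler control of a $\varprojlim^{1}$ term) rather than derived; in Feshbach's work it emerges from an explicit analysis of which finite subgroups of $N_{G}(T)$ detect the completed Burnside ring, not from a formal derived-limit argument. If you intend to use the result, cite \cite{feshbachsegal} and \cite{bauersegal} as the paper does; if you intend to prove it, the transfer reduction to $N_{G}(T)$ is the missing idea your outline needs before any of the later steps can be made precise.
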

 We  extend  this  theorem, (mainly  to to  illustrate  the  interactions  of  our  methods  with  previous results)  in the  following   direction: 
 
 \begin{theorem}\label{theorem segalconjecturenoncompact} [Segal
    Conjecture for almost  connected  Lie groups] 
The  Segal conjecture  is  true  for (non compact) Lie  groups  with
finitely  many  components. That  is, there  is  a  map 
$$A^{\rm op} (G)_{\ideal_{G}} \to \Pi_{1}^{0}(\B G)$$
 with  dense  image  in the skeletal filtration  whenever a  maximal
 compact subgroup of  $G$  satisfies  the  hypotheses  of  theorem
 \ref{theorem   segalconjecturecompact}.   
 \end{theorem}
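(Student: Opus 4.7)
The plan is to reduce the statement to the compact case \ref{theorem segalconjecturecompact} by exploiting that a Lie group with finitely many components is almost connected, and then transport the conclusion along the natural isomorphisms provided by the induction structure of equivariant cohomotopy developed in Section \ref{sectioncohomology}.

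First, let $K\subset G$ be a maximal compact subgroup. Since $G$ has finitely many components, a theorem of Mostow--Abels asserts that the homogeneous space $G/K$ is diffeomorphic to a euclidean space, hence contractible, and by the slice theorem it is a proper $G$-CW complex with isotropy exactly the conjugates of closed subgroups of $K$. Consequently $G/K$ is a model for $\EfinG$. The inclusion $\alpha:K\hookrightarrow G$ is a proper Lie group homomorphism with trivial kernel, and the identification $G\times_K \{*\}= G/K$ presents $\EfinG$ as an induced space. The induction structure proven in Section \ref{sectioncohomology} therefore yields a natural isomorphism
\[
\Pi_{G}^{0}(\EfinG)\;\xrightarrow{\;\cong\;}\;\Pi_{K}^{0}(\{*\}),
\]
which by the compact Lie case \cite{tomdieckburnside} identifies $A^{\mathrm{op}}(G)$ with the classical Burnside ring $A(K)$.

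Next I verify that this isomorphism carries the augmentation ideal $\ideal_{G}$ onto $\ideal_{K}=I_K$. Both are defined as the kernel of restriction along the trivial subgroup composed with restriction to the $0$-skeleton of the classifying space; since the $0$-skeleton of $G/K$ is a single orbit of type $G/K$ and the induction isomorphism is natural under group homomorphisms and change of space, the two restriction maps fit into a commutative square and the kernels coincide. This gives an isomorphism of completions
\[
A^{\mathrm{op}}(G)_{\ideal_{G}}\;\cong\; A(K)_{\ideal_{K}}.
\]
On the other side, the contractibility of $G/K$ implies that $\mathrm{B}G\simeq \mathrm{B}K$, so the classical non-equivariant cohomotopy satisfies $\Pi_{\{1\}}^{0}(\mathrm{B}G)\cong \Pi_{\{1\}}^{0}(\mathrm{B}K)$, and the identification is compatible with skeletal filtrations up to a shift that becomes trivial in the limit.

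Finally, the hypothesis that a maximal compact subgroup $K$ satisfies the conditions of Theorem \ref{theorem segalconjecturecompact} means precisely that the Weyl representation of $K$ is not of the prohibited quaternionic type, so that theorem yields a map $A(K)_{\ideal_{K}}\to \Pi_{\{1\}}^{0}(\mathrm{B}K)$ with dense image in the skeletal filtration. Composing with the two isomorphisms above produces the desired map
\[
A^{\mathrm{op}}(G)_{\ideal_{G}}\;\longrightarrow\;\Pi_{\{1\}}^{0}(\mathrm{B}G)
\]
with dense image in the skeletal filtration. The main technical obstacle is the careful check that the induction isomorphism of Section \ref{sectioncohomology} is multiplicative, sends augmentation to augmentation, and is compatible with the skeletal filtration on both sides; the multiplicativity is inherited from the cup product defined in Section \ref{sectioncohomology} together with the fact that induction from a closed subgroup of finite index of components is a ring map, while the filtration compatibility comes from the fact that the $G$-CW filtration of $\EfinG=G/K$ is induced from the trivial $K$-CW filtration of the point.
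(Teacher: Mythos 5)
Your proposal is correct and follows essentially the same route as the paper: identify $G/K$ as a model for $\EfinG$ (the paper cites Hochschild for $G\approx G/K\times K$ with $G/K\approx\mathbb{R}^k$), use the induction isomorphism to get $A^{\rm op}(G)\cong A(K)$, note $\B G\simeq \B K$, and invoke Theorem \ref{theorem segalconjecturecompact} for $K$. Your additional checks on the augmentation ideals and the skeletal filtration are points the paper passes over silently, but they do not change the argument.
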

\begin{proof}
Let $G$  be  a Lie  group with  finitely  many  components. Then
\begin{enumerate}
\item{There  is up  to conjugacy  a unique  maximal compact subgroup $K$ of
$G$. Any  other  compact  subgroup  is subconjugated  to $K$.}
\item{There exist  diffeomorphisms $G \approx G/K\times K$
 and  $G/K\approx \mathbb{R}^{k}$.} 
\end{enumerate}
See \cite{hochschild}, theorem  3.1 p  180 for a  proof  of  this. Hence,  the  space  $G/K$  carries  a  proper  action, and  in  particular, the induction
isomorphism gives an  isomorphism $A^{op}(G)\cong A(K)$, where $A(K)$  stands  for
the  Burnside  ring  in the  sense  of  Tom  Dieck\cite{tomdieckburnside}. On the  other  hand, the  classifying  spaces $\B G$ and  $\B K$ have  the  same  homotopy  type. Hence  the  map  
$${\Pi_{G}^{0}}_{\ideal_{G}} (\EfinG) \overset{\cong}{\to} \Pi_{K}^{0}(\{*\})_{\ideal_{K}}\to \Pi_{e}^{0}(\B K)\overset{\cong}{\to} \Pi_{1}^{0}(\B G) $$ 
has  dense image  as a consequence  of theorem \ref{theorem segalconjecturecompact}.

\end{proof}

\typeout{-------------------------------------- References  ---------------------------------------}

\bibliography{nonlinearityjulio}
\bibliographystyle{abbrv}

\end{document}